\newtheorem{theorem}{Theorem}[section]
\newtheorem{lemma}[theorem]{Lemma}
\newtheorem{corollary}[theorem]{Corollary}
\newtheorem{theirtheorem}{Theorem}
\newtheorem{theirproposition}[theirtheorem]{Proposition}
\newcommand{\Sum}[2]{\underset{#1}{\overset{#2}{\sum}}}
\newcommand{\Summ}[1]{\underset{#1}{\sum}}
\newcommand{\ord}{\text{\rm ord}}
\newcommand{\R}{\mathbb{R}}
\newcommand{\be}{\begin{equation}}
\newcommand{\ee}{\end{equation}}
\newcommand{\bnml}{\begin{multline}}
\newcommand{\enml}{\end{multline}}
\newcommand{\buml}{\begin{multline*}}
\newcommand{\euml}{\end{multline*}}
\newcommand{\ber}{\begin{eqnarray}}
\newcommand{\eer}{\end{eqnarray}}
\newcommand{\nn}{\nonumber}
\newcommand{\pt}{\underset{2}{+}}
\newcommand{\po}{\underset{1}{+}}
\newcommand{\pp}[1]{\underset{#1}{+}}
\newcommand{\G}{\mathcal{G}}
\author{David J. Grynkiewicz}\thanks{Supported in part by the Austrian Science
Fund FWF (Project Number M1014-N13).}
\address{Institut f\"ur Mathematik und Wissenschaftliches Rechnen \\
Karl-Franzens-Universit\"at Graz \\
Heinrichstra\ss e 36\\
8010 Graz, Austria} \email{diambri@hotmail.com}
\subjclass[2000]{11P70, 11B75}
\title{On Extending Pollard's Theorem for $t$-Representable Sums}
\begin{document}

\begin{abstract} Let $t\geq 1$, let $A$ and $B$ be finite, nonempty subsets
of an abelian group $G$, and let $A\pp{i} B$ denote all the elements
$c$ with at least $i$ representations of the form $c=a+b$, with $a\in
A$ and $b\in B$.  For $|A|,\,|B|\geq t$, we show that either \be\label{almost}\Sum{i=1}{t}|A\pp{i}
B|\geq t|A|+t|B|-2t^2+1,\ee or else there exist $A'\subseteq A$ and $B'\subseteq B$ with \ber \nn l&:=&|A\setminus A'|+|B\setminus B'|\leq t-1,\\\nn A'\pp{t}B'&=&A'+B'=A\pp{t}B,\mbox{ and }\\\nn
\Sum{i=1}{t}|A\pp{i}B|&\geq& t|A|+t|B|-(t-l)(|H|-\rho)-tl\geq t|A|+t|B|-t|H|,\eer where $H$ is the (nontrivial) stabilizer of $A\pp{t} B$ and $\rho=|A'+H|-|A'|+|B'+H|-|B'|$. In the case $t=2$, we improve (\ref{almost}) to $|A\pp{1}B|+|A\pp{2}B|\geq 2|A|+2|B|-4$. \end{abstract}

\maketitle

\section{Introduction}

Let $G$ be an abelian group, and let $A,\,B\subseteq G$ be finite
and nonempty. Their \emph{sumset} is the set all pairwise sums:
$$A+B:=\{a+b\mid a\in A,\,b\in B\}.$$ For $g\in G$, we let
\be\label{eq-represenation}r_{A,B}(g)=|(g-B)\cap A|=|(g-A)\cap B|\ee
denote the number of representations of $g$ as a sum $g=a+b$ with
$a\in A$ and $b\in B$.  We define
$$A\pp{i}B=\{g\in G\mid r_{A,B}(g)\geq i\}$$ to be the set of $i$-representable sums. Thus $A\po B=A+B$.

If $A$ is a union of $H$-cosets, where $H\leq G$, we say that $A$
is \emph{$H$-periodic}. The maximal group for which $A$ is
$H$-periodic is the \emph{stabilizer} of $A$, denoted by
$$H(A):=\{x\in G\mid x+A+B=A+B\}.$$ We say $A$ is \emph{periodic}
if $H(A)$ is nontrivial and that $A$ is \emph{aperiodic} otherwise. We use
$\phi_H:G\rightarrow G/H$ to denote the natural homomorphism.

When $G=C_p$ is cyclic of prime order, the Cauchy-Davenport Theorem
asserts that $|A+B|\geq \min \{p,\,|A|+|B|-1\}$ \cite{CDT-Cauchy}\cite{CDT-Davenport}\cite{natbook}\cite{taobook}.
Kneser generalized this result to an arbitrary
abelian group by proving the following \cite{kt}\cite{kt-asymptotic}\cite{kst}\cite{natbook}\cite{taobook}.

\begin{theirtheorem}[Kneser's Theorem] Let $G$ be an abelian group and $A,\,B\subseteq G$ be finite and nonempty.
Then \be\label{bound-kt}|A+B|\geq |A+H|+|B+H|-|H|\geq
|A|+|B|-|H|,\ee where $H$ is the stabilizer of
$A+B$.\end{theirtheorem}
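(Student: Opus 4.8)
The second inequality in \eqref{bound-kt} is immediate from $A\subseteq A+H$ and $B\subseteq B+H$, so the content is the first inequality. I would argue by contradiction, fixing a counterexample for which $|B|$ is as small as possible, and first reduce to the case where $A+B$ is \emph{aperiodic}, i.e. $H=\{0\}$. Indeed, if $H\neq\{0\}$ then in the quotient $G/H$ the images $\ol{A}=\phi_H(A)$ and $\ol{B}=\phi_H(B)$ have an aperiodic sumset $\ol{A}+\ol{B}=\phi_H(A+B)$, and $|A+B|=|H|\,|\ol{A}+\ol{B}|$. Were the bound $|\ol{A}+\ol{B}|\geq|\ol{A}|+|\ol{B}|-1$ to hold, multiplying by $|H|$ and using $|A+H|=|H|\,|\ol{A}|$ and $|B+H|=|H|\,|\ol{B}|$ would recover \eqref{bound-kt} for $(A,B)$; hence $(\ol{A},\ol{B})$ is again a counterexample. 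Since $|\ol{B}|\leq|B|$ and a strict drop is forbidden by minimality, we may replace $(A,B,G)$ by $(\ol{A},\ol{B},G/H)$ and assume $H=\{0\}$. It then suffices to reach a contradiction from $|A+B|\leq|A|+|B|-2$; note $|A|,|B|\geq 2$, since a one-element factor forces equality in \eqref{bound-kt}.

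The key tool is the Dyson ($e$-)transform: for $e\in G$ put $A(e)=A\cup(e+B)$ and $B(e)=B\cap(A-e)$. One checks directly that $A(e)+B(e)\subseteq A+B$ and, since $|A\cap(e+B)|=|B\cap(A-e)|$, that $|A(e)|+|B(e)|=|A|+|B|$. Call $e$ \emph{admissible} if $\emptyset\neq B(e)\subsetneq B$, which requires $e\in A-B$ together with $e+B\not\subseteq A$. If no $e$ were admissible, then $e+B\subseteq A$ for every $e\in A-B$, i.e. $A+(B-B)\subseteq A$, so $A$ would be $\langle B-B\rangle$-periodic; as $|B|\geq 2$ this makes $A$, and hence the translate $A+B$, periodic, contradicting $H=\{0\}$. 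So an admissible $e$ exists, giving $|B(e)|<|B|$. By minimality of $|B|$ the pair $(A(e),B(e))$ is not a counterexample, so Kneser's bound holds for it: writing $H_e:=H(A(e)+B(e))$,
\[
|A+B|\;\geq\;|A(e)+B(e)|\;\geq\;|A(e)+H_e|+|B(e)+H_e|-|H_e|.
\]
If $H_e=\{0\}$ this reads $|A+B|\geq|A(e)|+|B(e)|-1=|A|+|B|-1$, contradicting $|A+B|\leq|A|+|B|-2$. Hence every admissible $e$ produces a nontrivially periodic transformed sumset.

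The main obstacle is precisely this surviving configuration: $A+B$ is aperiodic while every admissible transform has nontrivial $H_e$. The whole contradiction hinges on a propagation principle: if the smaller sumset $A(e)+B(e)$ is $H_e$-periodic, then so is $A+B$, forcing $H_e\subseteq H=\{0\}$ and contradicting $H_e\neq\{0\}$. To establish propagation I would exploit that $A(e)+B(e)$ is a union of $H_e$-cosets sitting inside $A+B$, and track how the sets $A\subseteq A(e)=A\cup(e+B)$ and $B(e)=B\cap(A-e)\subseteq B$ distribute among these cosets; choosing $e$ so as to maximize $|H_e|$ and, if necessary, iterating the transform should prevent any $H_e$-coset from being only partially filled in $A+B$. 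Making this coset bookkeeping rigorous is the technical heart of the argument; granting it, no counterexample can exist and the first inequality of \eqref{bound-kt} follows in general.
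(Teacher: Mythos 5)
The paper itself contains no proof of Kneser's Theorem: it is quoted as Theorem A from the literature and used as a black box (the author explicitly proves Theorems \ref{thm-main} and \ref{thm-main-t=2} \emph{assuming} Kneser's Theorem is known), so your attempt has to stand on its own. Its skeleton is the standard one, and those parts are sound: the reduction to the aperiodic case by passing to $G/H$, the existence of an admissible Dyson transform when $A+B$ is aperiodic (since otherwise $A+(B-B)\subseteq A$ makes $A+B$ periodic), the conservation law $|A(e)|+|B(e)|=|A|+|B|$, and the conclusion that in a minimal counterexample every admissible transform must have nontrivial stabilizer $H_e$.

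However, the step you defer to the end --- the ``propagation principle'' asserting that $H_e$-periodicity of $A(e)+B(e)$ forces $H_e$-periodicity of $A+B$ --- is not a technical bookkeeping matter: it is false as stated, and it is exactly where the entire difficulty of Kneser's Theorem lives. Concretely, take $G=\Z/6\Z$, $A=\{0,1\}$, $B=\{0,1,3\}$, $e=3$. Then $e$ is admissible, with $B(e)=B\cap(A-e)=\{3\}$ and $A(e)=A\cup(e+B)=\{0,1,3,4\}$, so $A(e)+B(e)=\{0,1,3,4\}$ is periodic with $H_e=\{0,3\}$, yet $A+B=\{0,1,2,3,4\}$ is aperiodic. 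Since no minimal counterexample actually exists (the theorem is true), any correct argument for your principle would have to use the hypothesis $|A+B|\leq|A|+|B|-2$ in an essential way, and you give no mechanism for doing so; periodicity of a subset does not in general propagate to a superset by coset bookkeeping alone, no matter how $e$ is chosen or iterated. The genuine proofs handle this case quite differently: one passes to the quotient $G/H_e$, applies the induction hypothesis there, and then carefully counts the elements of $A+B$ lying in $H_e$-cosets that are only partially filled, with further case analysis --- see Kneser's original argument or its renditions in Nathanson's and Tao--Vu's books. Indeed, CASE 4.2 of the present paper, which the author notes is modeled on a method originally used to prove Kneser's Theorem, gives a good sense of how much sub-case analysis this configuration demands. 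As it stands, your proposal reduces Kneser's Theorem to an unproved, and in its stated generality untrue, claim, so the proof is incomplete at its crucial point.
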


We remark that the stronger inequality in (\ref{bound-kt}) is
actually easily derived from the weaker bound in (\ref{bound-kt})
(see \cite{PhD-Dissertation}). We call an element of $(A+H)\setminus A$ an \emph{$H$-hole},
and, letting $\rho:=|A+H|-|A|+|B+H|-|B|$ denote the number of
$H$-holes in $A$ and $B$, we observe that Kneser's Theorem implies
\be\label{bound-kt-holes} |A+B|\geq |A|+|B|-|H|+\rho,\ee with equality holding in (\ref{bound-kt}) and (\ref{bound-kt-holes}) when $|A+B|\leq |A|+|B|-1$.

In 1974, Pollard obtained a much different generalization of the
Cauchy-Davenport Theorem for $t$-representable sums \cite{Pollard}.

\begin{theirtheorem}[Pollard's Theorem] Let $A,\,B\subseteq C_p$ and $1\leq t\leq \min \{|A|,\,|B|\}$. Then
\be\label{pollards-bound}\Sum{i=1}{t}|A\pp{i} B|\geq t\cdot\min\{p,\,|A|+|B|-t\}.\ee
\end{theirtheorem}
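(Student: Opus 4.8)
The plan is to recast the quantity $\Sum{i=1}{t}|A\pp{i}B|$ as a single truncated sum over the representation function, and then to run an induction on $|A|+|B|$ in which Cauchy--Davenport governs the individual steps. The starting point is the layer-cake identity
\[\Sum{i=1}{t}|A\pp{i}B|=\Summ{g}\min\{r_{A,B}(g),\,t\},\]
valid because, for each $g$, exactly $\min\{r_{A,B}(g),t\}$ of the nested sets $A\pp{1}B\supseteq A\pp{2}B\supseteq\cdots\supseteq A\pp{t}B$ contain $g$. Writing $F(A,B):=\Summ{g}\min\{r_{A,B}(g),t\}$, the goal becomes $F(A,B)\geq t\cdot\min\{p,\,|A|+|B|-t\}$. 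In the unsaturated range $|A|+|B|-t\leq p$ the base case $|A|=|B|=t$ is immediate: then $r_{A,B}(g)\leq\min\{|A|,|B|\}=t$ for every $g$, so $F(A,B)=\Summ{g}r_{A,B}(g)=|A||B|=t^2=t(|A|+|B|-t)$.

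For the inductive step I would peel a single element. Deleting $a_0$ from $A$ alters the representation function only on the translate $a_0+B$, where it drops by one, so
\[F(A,B)-F(A\setminus\{a_0\},B)=\#\{b\in B:\ r_{A,B}(a_0+b)\leq t\},\]
since $\min\{r,t\}-\min\{r-1,t\}$ equals $1$ when $1\leq r\leq t$ and $0$ when $r>t$; the symmetric identity holds for peeling $B$. The target $t(|A|+|B|-t)$ decreases by exactly $t$ when either $|A|$ or $|B|$ drops by one, so the induction will close provided I can always locate an element of $A$ or $B$ (in a set whose size still exceeds $t$) whose translate of the opposite set meets the \emph{low region} $L:=\{g:\ 1\leq r_{A,B}(g)\leq t\}$ in at least $t$ points. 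It is here that the prime modulus must be used, to guarantee that $L$ is large and spread out enough for such a ``good'' element to exist.

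The main obstacle is precisely this existence statement. It genuinely fails in a general abelian group, the cleanest witness being $A=B=H$ for a subgroup $H$ with $|H|>t$: every sum then has multiplicity $|H|>t$, the low region is empty, and no good peel exists---matching the fact that Pollard's bound itself fails there. Consequently the argument must proceed by a dichotomy: either a good peel exists (the ``sparse'' case, where the induction advances and the $-t$ is absorbed), or the low region is so small that it cannot meet any translate in $t$ points, in which case I would show via Cauchy--Davenport that $A+B$, and indeed $A\pp{k}B$ for the relevant small indices $k$, must fill $C_p$, so that $F(A,B)$ is already large enough directly. The saturated range $|A|+|B|\geq p+t$ is folded into this second alternative, since there the multiplicities are uniformly high and $A+B=C_p$, forcing $F(A,B)=tp=t\cdot\min\{p,|A|+|B|-t\}$; I would treat this as the terminal branch of the dichotomy rather than peeling all the way down to the base case.

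I expect the verification of the good-peel lemma in $C_p$ to be the technical heart of the proof: one must rule out that every translate of $B$ (and of $A$) meets $L$ in fewer than $t$ points while the sets are still unsaturated, which is exactly the regime where compensation between the different $|A\pp{i}B|$ is invisible to any single term. This is also why I favor the peeling/Cauchy--Davenport route over a rearrangement argument (comparing $A,B$ to arcs, where $F$ is minimized): the combinatorial formulation of the peeling lemma replaces Cauchy--Davenport by Kneser's Theorem essentially verbatim, with $L$ measured relative to the stabilizer $H$ and the $H$-holes $\rho$, and so is the version that extends to arbitrary abelian groups as in the paper's main result.
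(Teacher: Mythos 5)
Your preparatory identities are all correct: the layer-cake formula $\Sum{i=1}{t}|A\pp{i}B|=\Summ{g}\min\{r_{A,B}(g),t\}$, the base case $|A|=|B|=t$, and the peeling formula for $F(A,B)-F(A\setminus\{a_0\},B)$. But what follows them is a reduction, not a proof: the entire theorem has been compressed into the unproven ``good-peel'' lemma, which you yourself defer as ``the technical heart.'' Moreover, the fallback branch of your dichotomy does not close quantitatively. If no good peel exists, then summing $|(a+B)\cap L|\leq t-1$ over $a\in A$ gives $\Summ{g\in L}\,r_{A,B}(g)\leq (t-1)|A|$, hence only $|L|\leq (t-1)\min\{|A|,|B|\}$. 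On the other hand, the only route by which Cauchy--Davenport can enter your second branch is through the first layer: writing $F(A,B)=\Summ{g\in L}\,r_{A,B}(g)+t\bigl(|A\pp{1}B|-|L|\bigr)\geq t|A\pp{1}B|-(t-1)|L|\geq t(|A|+|B|-1)-(t-1)|L|$, you beat the target $t(|A|+|B|-t)$ only when $|L|\leq t$. Nothing in the proposal bridges the gap between $|L|\leq t$ and $|L|\leq(t-1)\min\{|A|,|B|\}$, and Cauchy--Davenport cannot do it, because it bounds only $|A\pp{1}B|$ and says nothing about the higher layers $A\pp{k}B$ with $k\geq 2$ --- lower bounds on those layers are precisely what Pollard's theorem asserts. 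So the proposal reduces the theorem to a lemma that is plausibly as hard as the theorem itself, and supplies no proof of that lemma.

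For comparison: the paper does not prove Theorem B (it is quoted from \cite{Pollard}), but Pollard's original argument --- whose two key facts the paper records as (\ref{dyson-card-unchanged}) and (\ref{dyson-sums}) --- avoids your difficulty by using the Dyson transform $B(x)=(x+B)\cap A$, $A(x)=(x+B)\cup A$: the transform preserves $|A|+|B|$, does not increase the left-hand side of (\ref{pollards-bound}), and strictly decreases $\min\{|A|,|B|\}$, so the induction terminates either at your base case $|B|=t$ or at a pair admitting no shrinking transform, i.e.\ with $B-B\subseteq H(A)$; over $C_p$ the latter forces $A=C_p$, where every element has $|B|\geq t$ representations and the saturated value $tp$ holds trivially. (Keeping the transformed pair inside the admissible range $|B(x)|\geq t$ is the one delicate point; in the paper's general setting this is exactly what STEP 3 of its proof is for.) It is telling that the paper's proof of its generalization does contain your peeling step verbatim --- STEP 2, whose conditions (\ref{trum-1}) and (\ref{trum-2}) are precisely ``a good peel exists'' --- yet all of the heavy machinery there (the transform, the additive-energy estimate of Lemma \ref{AB_large_from_A-B}, STEPS 3 and 4) is expended on the configurations where no good peel is available. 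That is strong evidence that your deferred lemma is not a routine verification to be filled in later but the actual content of the theorem.
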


His bound is tight as is seen by considering two arithmetic progressions with the same difference, and in fact the cases of equality have been characterized \cite{pollard-equality}. An extension for restricted sumsets over fields is also known \cite{pollard-restrictedsumset-symmetric}\cite{dias-survey-pollardrestricted}. However, very little is known concerning $t$-representable sums in an arbitrary abelian group. Under the stringent assumption that every difference in one of the sets generate all of $G=C_n$, Pollard \cite{pollard-chowla} obtained a version of his theorem similar in spirt to Chowla's extension \cite{chowla}\cite{natbook} of the Cauchy-Davenport Theorem.

\begin{theirtheorem} Let $A$ and $B$ be finite, nonempty subsets of a cyclic group $C_n$. If $|A|,\,|B|\geq t\geq 1$ and $\ord (x-y)=n$ for all distinct $x,\,y\in B$, then $$\Sum{i=1}{t}|A\pp{i}B|\geq t\cdot \min\{n,\, |A|+|B|-t\}.$$
\end{theirtheorem}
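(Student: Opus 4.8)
The plan is to first dispose of the \emph{saturated} range and then prove the substantive inequality by induction on $|B|$ with $t$ fixed, throughout using the identity
\[
\Sum{i=1}{t}|A\pp{i}B|=\Summ{g\in C_n}\min\{r_{A,B}(g),\,t\}.
\]
If $|A|+|B|\geq n+t$, then for every $g\in C_n$ inclusion--exclusion gives $r_{A,B}(g)=|(g-A)\cap B|\geq |A|+|B|-n\geq t$, so $A\pp{i}B=C_n$ for all $i\leq t$ and the sum equals $tn=t\cdot\min\{n,|A|+|B|-t\}$; the Chowla hypothesis is not needed here. Hence I may assume $|A|+|B|\leq n+t-1$, so that the target becomes $t(|A|+|B|-t)$.

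The base case $|B|=t$ is immediate: then $r_{A,B}(g)\leq |B|=t$ for all $g$, so $\min\{r_{A,B}(g),t\}=r_{A,B}(g)$ and the sum equals $\Summ{g}r_{A,B}(g)=|A||B|=t|A|=t(|A|+|B|-t)$, with equality. For the inductive step, assume $|B|\geq t+1$, fix $b\in B$, and set $B^-=B\setminus\{b\}$. Since $r_{A,B}=r_{A,B^-}+\mathbf 1_{A+b}$, a termwise comparison of the truncations $\min\{\cdot,t\}$ yields
\[
\Sum{i=1}{t}|A\pp{i}B|=\Sum{i=1}{t}|A\pp{i}B^-|+\big|(A+b)\setminus(A\pp{t}B^-)\big|.
\]
As $B^-$ still satisfies the Chowla hypothesis, $|B^-|\geq t$, and $(A,B^-)$ remains non-saturated, the induction hypothesis bounds the first term below by $t(|A|+|B|-1-t)=t(|A|+|B|-t)-t$. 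Thus it suffices to exhibit one $b\in B$ for which the newly covered set has size at least $t$, i.e.
\[
\big|(A+b)\setminus(A\pp{t}B^-)\big|=|A|-\big|(A+b)\cap(A\pp{t+1}B)\big|\geq t,
\]
where the displayed equality uses $r_{A,B^-}(g)=r_{A,B}(g)-1$ on $A+b$.

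This last claim is the crux, and it is exactly where $\ord(x-y)=n$ must be used. Equivalently, I must locate a translate $A+b$ $(b\in B)$ containing at least $t$ elements $g$ with $r_{A,B}(g)\leq t$. Averaging over $b\in B$ is too weak, since $\Summ{b\in B}\big|(A+b)\cap(A\pp{t+1}B)\big|=\Summ{g\in A\pp{t+1}B}r_{A,B}(g)$ controls only the \emph{average}, not the maximal, coverage; the correct $b$ is an extreme one. If $A\pp{t+1}B=C_n$ the sum bound is already trivial, so I may assume the low-representation set $\{g:r_{A,B}(g)\leq t\}$ is nonempty. I would then fix distinct $b_0,b_1\in B$ and exploit that $d:=b_1-b_0$ generates $C_n$ to order $C_n$ linearly as $x_0,x_0+d,x_0+2d,\dots$; a gap in $\{g:r_{A,B}(g)\leq t\}$ lets me cut this cycle into an interval and run the classical $C_p$/$\Z$ endpoint argument, choosing $b$ to be the $d$-extreme element of $B$, so that the extreme elements of $A+b$ admit at most $t$ representations. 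Chowla's Theorem (the case $t=1$, $|A+B|\geq\min\{n,|A|+|B|-1\}$) and Kneser's Theorem would be invoked to control the size and periodicity of the relevant sumsets in the residual sub-case $A+B=C_n$.

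The main obstacle I anticipate is precisely this crux: transferring the linear endpoint reasoning available over $\Z$ or $C_p$ to the cyclic group $C_n$, where the only surrogate for an order is the generator $d=b_1-b_0$ furnished by the Chowla hypothesis, and where the no-gap sub-case $A+B=C_n$ must be treated separately---most likely by peeling an element so as to reduce $t$, or by a direct pigeonhole count. Everything else reduces to the bookkeeping identity and the clean base case $|B|=t$.
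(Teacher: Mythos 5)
The paper never proves this statement: it is quoted (as background Theorem~C) from Pollard's paper \cite{pollard-chowla}, so there is no internal proof to compare yours against, and your argument must stand entirely on its own. Its scaffolding does stand: the truncation identity $\Sum{i=1}{t}|A\pp{i}B|=\Summ{g}\min\{r_{A,B}(g),t\}$, the disposal of the saturated range $|A|+|B|\geq n+t$, the base case $|B|=t$, and the bookkeeping identity $\Sum{i=1}{t}|A\pp{i}B|=\Sum{i=1}{t}|A\pp{i}B^-|+|(A+b)\setminus(A\pp{t+1}B)|$ are all correct, and they correctly reduce the theorem to what you call the crux: there exists $b\in B$ with $|(A+b)\setminus(A\pp{t+1}B)|\geq t$. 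But you never prove the crux --- you only sketch an ``endpoint'' argument and flag it yourself as the anticipated obstacle --- and that crux is the entire mathematical content of the theorem; what you have is a correct reduction, not a proof.

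Worse, the sketch you offer for the crux cannot work as written, because it extracts from the Chowla hypothesis only the fact that a single difference $d=b_1-b_0$ generates $C_n$, and that information is provably insufficient. Take $n=2m$ with $m\geq 3$, $A=\{0,m\}$, $B=\{0,1,m,m+1\}$, and $t=1$. Here $d=1-0$ generates $C_n$, $|A|+|B|=6\leq n+t-1$, and the low-representation set is nonempty; but $B$ is periodic with respect to $K=\{0,m\}$, so $A+B=B$ and every element of $A+B$ has exactly two representations. Hence $(A+b)\setminus(A\pp{2}B)=\emptyset$ for every $b\in B$, your crux claim fails, and indeed the conclusion of the theorem fails, since $|A+B|=4<5=|A|+|B|-1$. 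This $B$ of course violates the full hypothesis (the difference $m$ has order $2$), which is exactly the point: any correct proof of the crux must exploit that \emph{every} difference of $B$ generates $C_n$ --- equivalently, that $B$ meets each coset of each proper subgroup at most once --- and nothing in your sketch does so. Separately, the endpoint heuristic itself does not transfer to $C_n$: after cutting the cycle at a gap, nothing forces the ``extreme'' elements of $A+b$ to have at most $t$ representations, since $A$ and $A+B$ can wrap around the cut. Closing the gap requires a genuinely group-theoretic ingredient at this point, e.g.\ Kneser's Theorem applied to $A\pp{t+1}B$ combined with the coset-intersection property of $B$, or a Dyson/Davenport transform argument in the style of Pollard's original proofs.
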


One of the only other results for general abelian groups is the following lemma of B. Green and I. Ruzsa that was used for analyzing sum-free sets \cite{Green-Ruzsa-sumfree}.

\begin{theirtheorem}Let $A$ and $B$ be finite and nonempty subsets of an abelian group $G$, and let $D$ denote the size of a maximum cardinality proper subgroup. Then $$\Sum{i=1}{t}|A\pp{i}B|\geq t\cdot\min\{|G|,\,|A|+|B|-D-t\}.$$\end{theirtheorem}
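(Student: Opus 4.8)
The plan is to rewrite the left-hand side as a capped representation count and then to lower-bound it through Kneser's Theorem. First I would record the identity
\[
\Sum{i=1}{t}|A\pp{i}B|=\Summ{g\in G}\min\{r_{A,B}(g),\,t\},
\]
which holds because $g$ is counted in $|A\pp{i}B|$ for exactly the indices $i=1,\dots,\min\{r_{A,B}(g),t\}$. This reduces the problem to bounding below a sum of truncated multiplicities. I would dispatch the saturated branch of the minimum immediately: if $|A|+|B|-D-t\geq|G|$, then (since $D,t\geq 1$) $|A|+|B|>|G|$, so for every $g\in G$ the pigeonhole estimate $r_{A,B}(g)=|A\cap(g-B)|\geq|A|+|B|-|G|\geq t$ holds, whence every summand equals $t$ and the sum is exactly $t|G|$. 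So I may assume $|A|+|B|-D-t<|G|$ and aim for the bound $t(|A|+|B|-D-t)$.

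For this main branch the goal is to feed genuine sumsets into Kneser's Theorem. The cleanest elementary device is that for any partition $B=B_1\cup\dots\cup B_t$ into $t$ (possibly empty) blocks one has $\Sum{i=1}{t}|A\pp{i}B|\geq\Sum{j=1}{t}|A+B_j|$: writing the right-hand side as $\Summ{g}|\{j:g\in A+B_j\}|$ and using that the blocks are disjoint, the inner count never exceeds $\min\{r_{A,B}(g),t\}$. Applying Kneser's Theorem to each nonempty $A+B_j$, bounding the stabilizer by $D$ whenever it is a proper subgroup (the case $A+B_j=G$ contributing the full $|G|$), then summing, gives a clean but lossy lower bound.

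The difficulty, and the reason this is not the whole story, is that an even split makes each $B_j$ have size about $|B|/t$, so summing $|A|+|B_j|-D$ yields only $t|A|+|B|-tD$, short of the target by roughly $(t-1)|B|$; the progressions $A=B=\{0,\dots,n-1\}$ show this deficit is real. Thus the factor $t$ in front of $|B|$ cannot come from carving up $B$ but must come from the multiplicities, and here the extreme cases pull in opposite directions: for progressions the multiplicity is concentrated, while for Sidon-type sets it is spread thinly off the diagonal, so that no product set $A'+B'\subseteq A\pp{i}B$ can capture $A\pp{i}B$. I therefore expect the correct route to mirror the stabilizer analysis of the present paper: let $H$ be the stabilizer of $A\pp{t}B$, pass to the quotient $G/H$ via $\phi_H$, where Kneser's bound is sharp, and run an induction on $t$ together with $|G|$, peeling off the top level $A\pp{t}B$ while tracking how the multiplicities $r_{A,B}$ descend under $\phi_H$. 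Where $H$ is a proper subgroup one has $|H|\leq D$, which is exactly how $D$ enters; where $H=G$ the set $A\pp{t}B$ is all of $G$ or empty and the extreme branches take over.

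The main obstacle I anticipate is precisely this multiplicity bookkeeping across the quotient: showing that removing $A\pp{t}B$ leaves a residual pair to which the inductive hypothesis applies with the multiplicities correctly reapportioned, and doing so without the loss suffered by the naive partition or by any single application of Kneser. Controlling the interaction between the truncation level $t$ and the periodicity of the intermediate level sets, so that the total $t^2$ loss (equivalently the $-D-t$ per copy) is never exceeded, is the delicate point; the progression and Sidon examples above are the natural tests any such argument must survive.
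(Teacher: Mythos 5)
Your proposal is not a proof; it is a correct warm-up followed by a plan, and the plan is where the theorem lives. What you actually establish is the identity $\Sum{i=1}{t}|A\pp{i}B|=\Summ{g\in G}\min\{r_{A,B}(g),\,t\}$, the saturated branch $|A|+|B|-D-t\geq |G|$, and the partition bound $\Sum{i=1}{t}|A\pp{i}B|\geq \Sum{j=1}{t}|A+B_j|\geq t|A|+|B|-tD$; you then correctly diagnose that this is short by roughly $(t-1)|B|$. But that missing $(t-1)|B|$ \emph{is} the content of the result, and the step you defer (``the multiplicity bookkeeping across the quotient \dots is the delicate point'') is never carried out. Worse, the scheme you sketch has a concrete defect: ``peeling off the top level $A\pp{t}B$'' leaves no ``residual pair to which the inductive hypothesis applies,'' because $A\pp{t}B$ is a set of \emph{sums}, not a subset of $A$ or of $B$; the truncated multiplicity function obtained by deleting the top layer is not the representation function $r_{A',B'}$ of any pair of sets, so the induction you describe has no object to run on. A further warning sign: nothing in your argument uses $|A|,|B|\geq t$, yet the main-branch inequality is false without it (take $G=C_7$, $|A|=6$, $B$ a singleton, $t=2$: the left side is $6$ while $t\cdot\min\{|G|,|A|+|B|-D-t\}=8$), so no completion of your outline that ignores this hypothesis can possibly work.

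For comparison: the paper states this result (Theorem D) without proof, citing \cite{Green-Ruzsa-sumfree}; the working route is Pollard's induction via the Dyson transform with Kneser's Theorem replacing Cauchy--Davenport, i.e.\ exactly the device the paper sets up in (\ref{dyson-card-unchanged}) and (\ref{dyson-sums}): for $x\in A-B$ put $A(x)=(x+B)\cup A$ and $B(x)=(x+B)\cap A$, so that $|A(x)|+|B(x)|=|A|+|B|$ and $\Sum{i=1}{t}|A(x)\pp{i}B(x)|\leq \Sum{i=1}{t}|A\pp{i}B|$. One inducts downward on $|B|$: if some $x$ gives $t\leq |B(x)|<|B|$, recurse on $(A(x),B(x))$; the base $|B|=t$ gives $\Sum{i=1}{t}|A\pp{i}B|=|A||B|=t|A|+t|B|-t^2$. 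The constant $D$ enters when the transform halts: if $|B(x)|=|B|$ for all $x\in A-B$, then $B-B\subseteq H(A)$, so with $K=\langle B-B\rangle$ either $K=G$, forcing $A=G$ and the $t|G|$ branch, or $K$ is proper, whence $B$ lies in a single $K$-coset, $|B|\leq |K|\leq D$, every element of $A+B=A+b_0$ has $|B|\geq t$ representations, and $\Sum{i=1}{t}|A\pp{i}B|=t|A|\geq t(|A|+|B|-D-t)$. So $D$ appears through $H(A)$ at the halting step, not through quotienting by the stabilizer of $A\pp{t}B$ as you propose. (The genuinely delicate residual case, where every nontrivial transform yields $|B(x)|\leq t-1$, is what STEP 3 and the energy Lemma \ref{AB_large_from_A-B} of this paper are engineered to handle.) None of this machinery, nor any substitute for it, is present in your proposal, so it has a genuine gap.
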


A generalization of the previous two results was obtained by O. Serra and Y. Hamidoune, but it remains unfinished \cite{oriol-yahya-pollardresult} (though I have been informed that they have just now completed the project). Besides these simple results, there had been no other advance on extending Pollard's Theorem to more general groups.

The goal of this paper is prove the following Kneser-type
version of Pollard's Theorem for $t$-representable sums.

\begin{theorem}\label{thm-main} Let $G$ be an abelian group, let $t\geq 1$,
and let $A,\,B\subseteq
G$ be finite and nonempty. If $|A|,\,|B|\geq t$, then either
\be\label{bound-pollard-weak}\Sum{i=1}{t}|A\pp{i}
B|\geq t|A|+t|B|-2t^2+1,\ee or else there exist $A'\subseteq A$ and $B'\subseteq B$ with \ber\label{bound-l-atmost-t-1}  l&:=&|A\setminus A'|+|B\setminus B'|\leq t-1,\\ \label{t-sums-are-equal}A'\pp{t}B'&=&A'+B'=A\pp{t}B,\mbox{ and }\\\label{bound-stabilizer}
\Sum{i=1}{t}|A\pp{i}B|&\geq& t|A|+t|B|-(t-l)(|H|-\rho)-tl\geq t|A|+t|B|-t|H|,\eer where $H$ is the (nontrivial) stabilizer of $A\pp{t} B$ and $\rho=|A'+H|-|A'|+|B'+H|-|B'|$.
\end{theorem}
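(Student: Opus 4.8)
The starting point is the multiplicity identity
$$\Sum{i=1}{t}|A\pp{i}B|=\Summ{g\in G}\min\{t,\,r_{A,B}(g)\},$$
which recasts the entire problem as a lower bound on the single ``capped'' weight $Q(A,B):=\Summ{g\in G}\min\{t,\,r_{A,B}(g)\}$, while making visible that $A\pp{1}B\supseteq A\pp{2}B\supseteq\cdots\supseteq A\pp{t}B$ is a decreasing chain whose smallest member carries the stabilizer $H=H(A\pp{t}B)$ of the theorem. As a sanity anchor the case $t=1$ is exactly Kneser's Theorem: if $|A+B|\geq|A|+|B|-1$ then (\ref{bound-pollard-weak}) holds, and otherwise $A+B$ is periodic and the choice $A'=A$, $B'=B$, $l=0$ reproduces (\ref{bound-l-atmost-t-1})--(\ref{bound-stabilizer}) verbatim from the strong form (\ref{bound-kt}). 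The plan for general $t$ is to run an induction on $|G|$ (with $|A|+|B|$ as a secondary measure), the inductive engine being a Dyson-type transform adapted to capped multiplicities.

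The key technical lemma I would isolate and prove first is that the Dyson $e$-transform does not increase $Q$: for each $e\in G$, setting $C=A\cup(B+e)$ and $D=B\cap(A-e)$ one has $|C|+|D|=|A|+|B|$, $C+D\subseteq A+B$, and crucially $Q(C,D)\leq Q(A,B)$. The preservation of $|A|+|B|$ and the containment are the usual Dyson identities; the inequality $Q(C,D)\leq Q(A,B)$ is the genuinely new point and is where I expect the main obstacle to lie, since capping at $t$ breaks the naive ``total mass $|A|\cdot|B|$'' accounting (indeed $|C|\cdot|D|\leq|A|\cdot|B|$ holds because $|D|=|A\cap(B+e)|\leq\min\{|A|,|B|\}$, but one must promote this to the truncated sum, presumably through a majorization argument comparing the sorted value-multisets $\{r_{C,D}(g)\}$ and $\{r_{A,B}(g)\}$). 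Granting this lemma, I would argue by contradiction from a counterexample minimizing $|G|$: transforming over all $e$ either strictly reduces the governing measure, contradicting minimality, or forces a periodicity condition on the extremal configuration, at which point I pass to the quotient $G/H$, apply the inductive hypothesis (respectively the strong Kneser bound) to $\phi_H(A)$ and $\phi_H(B)$, and lift.

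The two alternatives of the theorem then correspond precisely to whether $H=H(A\pp{t}B)$ is trivial. When $A\pp{t}B$ is aperiodic the lifted estimate collapses the $H$-dependent terms and yields the clean value $t|A|+t|B|-t^{2}$, comfortably inside (\ref{bound-pollard-weak}); the slack $t^{2}-1$ is what absorbs the contribution of the few lower levels $A\pp{i}B$ that may still be periodic, and controlling how many such levels can occur before $A\pp{t}B$ itself becomes periodic is a pigeonhole step requiring care but no new idea. When $H$ is nontrivial I would extract the structural data from the extremal configuration: deleting from $A$ and $B$ only those elements whose removal does not alter $A\pp{t}B$, I obtain $A'\subseteq A$ and $B'\subseteq B$ that are \emph{$t$-saturated}, i.e.\ satisfy $A'\pp{t}B'=A'+B'=A\pp{t}B$ as in (\ref{t-sums-are-equal}).

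It remains to verify $l\leq t-1$ and the sharp estimate (\ref{bound-stabilizer}), which is the second delicate point. Kneser's Theorem applied to $\phi_H(A')+\phi_H(B')$ gives the common value $|A'+B'|\geq|A|+|B|-l-|H|+\rho$, contributing to all $t$ levels and producing the naive bound $t\big(|A|+|B|-l-|H|+\rho\big)$; the improvement to $-(t-l)(|H|-\rho)-tl$ is exactly the extra $l(|H|-\rho)$ gained because each of the $l$ deleted elements, when reinstated, raises the capped count on the lower levels by a full $|H|-\rho$ (here the periodicity of $A\pp{t}B$ is used to see that a reinstated element fills essentially an entire $H$-coset's worth of fresh sub-$t$ representations). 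Bounding $l$ by $t-1$ should follow by showing that a configuration with $l\geq t$ saturated deletions would force one more profitable transform, contradicting extremality. The final inequality $\geq t|A|+t|B|-t|H|$ is then immediate from $l\geq0$, $\rho\geq0$ and $|H|-\rho\leq|H|$. I expect the transform inequality $Q(C,D)\leq Q(A,B)$ and the exact level-by-level hole bookkeeping to be the two places demanding the most work; the reduction to $G/H$ and the saturation extraction, while technical, follow the familiar Kneser template.
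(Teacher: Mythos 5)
Your skeleton (Dyson transform, induction, Kneser in a quotient) belongs to the same family as the paper's proof, but you have located the difficulty in the wrong place, and the steps you pass over quickly are exactly where the paper must work hardest. The monotonicity $Q(C,D)\leq Q(A,B)$ that you single out as the main obstacle is not one: Pollard's containment $A(x)\pp{i}B(x)\subseteq x+A\pp{i}B$ (the paper's (\ref{dyson-sums}), already proved in Pollard's original paper) holds for every $i\geq 1$, so $|A(x)\pp{i}B(x)|\leq|A\pp{i}B|$ level by level and your inequality follows by summing over $i\leq t$; no majorization of sorted multiplicity multisets is needed. The obstruction you never address is that the transform can degenerate: to apply the induction hypothesis to the pair $(C,D)$ you need $|D|=|B\cap(A-e)|\geq t$, and for $t\geq 2$ there is no a priori reason that any admissible $e$ achieves this. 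Dealing with the case $|D|\leq t-1$ is the paper's STEP 3, and it is not a routine patch: it requires a separate additive-energy lower bound (Lemma \ref{AB_large_from_A-B}), the auxiliary set $T=\{y \mid y+B\subseteq A\}$, and (for $t=2$) a Sidon-set argument; a large fraction of the whole proof lives there. Without something playing this role, your induction cannot be run on the extremal configuration at all.

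The structural half of your proposal is also thinner than it needs to be. Your recipe for producing $A'$ and $B'$ (delete elements whose removal does not change $A\pp{t}B$) comes with no argument that the resulting pair satisfies $A'+B'=A'\pp{t}B'=A\pp{t}B$, and your bound $l\leq t-1$ (``would force one more profitable transform'') is an assertion, not a proof. Note moreover that your hole-counting step, in which each deleted element contributes a full $|H|-\rho$ to the capped sum, is only valid when these contributions are cumulative, which in the paper is deduced precisely from $l\leq t-1$ (if $l\geq t$, a single sum outside $A'+B'$ could have more than $t$ representations and be undercounted); so as written, your argument for (\ref{bound-stabilizer}) presupposes the bound on $l$ that you still owe. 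In the paper neither fact is extracted from an extremal configuration: (\ref{bound-l-atmost-t-1}) and (\ref{t-sums-are-equal}) are inherited through the induction applied to the transformed pair $(A(x),B(x))$, and transferring them back to $(A,B)$ is the long CASE 4.1/CASE 4.2 analysis, driven by the critical-pair mechanism of Lemma \ref{lemma-for-critical-pair} applied in the quotient --- periodicity of $A\pp{t}B$ alone does not give the coset-filling property you invoke. Finally, your outer induction on $|G|$ is ill-posed, since $G$ may be infinite; the paper instead inducts on $\Sum{i=1}{t}|A\pp{i}B|$, then on $|A|+|B|$ (increasing), then on $\min\{|A|,\,|B|\}$, and it is this choice of measure that legitimizes both the transform step and the element-removal step (STEP 2) as inductive moves.
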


Note that the structural information given by (\ref{bound-l-atmost-t-1}), (\ref{t-sums-are-equal}) and (\ref{bound-stabilizer}) is extremely strong, as it tells us that $A\pp{t}B$ can be considered as an ordinary sumset $A'+B'$ for two large subsets of $A$ and $B$, and that a Kneser-like stabilizer bound also holds (as we will see in the proof, the latter part (\ref{bound-stabilizer}) is actually a simple consequence of (\ref{bound-l-atmost-t-1}), (\ref{t-sums-are-equal}) and Kneser's Theorem, assuming (\ref{bound-pollard-weak}) fails). We cannot in general hope for such a strong statement to hold (at least for $t\geq 3$) for any pair of subsets failing to satisfy Pollard's bound (\ref{pollards-bound}), as the following two examples illustrate.

\medskip

\emph{Example 1:} Let $H\leq G$ be finite, and let $A,\,B\subseteq G$ be $H$-periodic subsets such that $\phi_H(A)$ and $\phi_H(B)$ are arithmetic progressions of the same difference $d$. Let $|\phi_H(A)|=s$ and $|\phi_H(B)|=r$. Suppose $r+s-1\leq |\langle d\rangle|$, $s\geq r\geq 2$, and $(r-1)|H|:=t-x$, with $1\leq x\leq  |H|-1$. Thus \ber\nn |A|&=&s|H|,\\ |B|&=&r|H|=t+|H|-x.\nn\eer
Then \ber&&\Sum{i=1}{t}|A\pp{i}B|-t|A|-t|B|+t^2=2|H|^2\Sum{i=1}{r-1}i+(s-r+1)|H|t-t|A|-t|B|+t^2\nn\\&=&\nn
(r-1)r|H|^2+(s-r+1)|H|t-ts|H|-t|B|+t^2\\&=&\nn (t-x)|B|+(s-r+1)|H|t-ts|H|-t|B|+t^2=-x|B|-r|H|t+|H|t+t^2\\&=&\nn -x(t+|H|-x)-t(t+|H|-x)+|H|t+t^2\\\nn &=&x^2-x|H|.
\eer
Note $H=H(A\pp{t}B)$ and $|H|\leq t-1$. When $x=\frac{1}{2}|H|$, this shows the bound fails by $\frac{1}{4}|H|^2$; moreover, $\Sum{i=1}{t}|A\pp{i}B|=t|A|+t|B|-(r-\frac{1}{2}+\frac{1}{4r-2})t|H|$, with $r\geq 2$ bounded only by $\frac{|\langle d\rangle |+1}{2}$, and so no Kneser-like bound of the form $t|A|+t|B|-\alpha t|H|$, for a constant $\alpha$, can hold in general. When $r=2$, it fails by $xt-2x^2$, which for $x=\frac{1}{4}t$ is $\frac{1}{8}t^2$. Moreover, it is easily seen that (\ref{t-sums-are-equal}) cannot hold.

\medskip

Thus the reduction from Pollard's bound given by (\ref{bound-pollard-weak}) in Theorem \ref{thm-main} is of the correct order of magnitude (quadratic in $t$), though  there is room for a small improvement in the coefficients.

\medskip

\emph{Example 2:} Let $0<L<H<G$ with $G/H$ cyclic. Let $A=(G\setminus H)\cup L$ and let $B'$ be an $H$-periodic subset such that $\phi_H(B')$ is an arithmetic progression with difference generating $G/H$ one of whose end terms is $0$. Let $r=|\phi_H(B')|$ and let $B=(B'\setminus H)\cup L$. Suppose $r\geq 2$ and $(r-1)|H|=t-x$, with $1\leq x\leq |L|-1$. Thus \ber\nn |A|&=&|G|-|H|+|L|,\\ |B|&=&(r-1)|H|+|L|=t+|L|-x.\nn\eer Then
\ber&&\Sum{i=1}{t}|A\pp{i}B|-t|A|-t|B|+t^2\nn\\\nn&=&(\frac{|G|}{|H|}-r)|H|t+((r-2)|H|+2|L|)|H|(r-1)+
|H|^2(r-1)+x|L|-t|A|-t|B|+t^2\\&=& |G|t-r|H|t+(t+2|L|-|H|-x)(t-x)+|H|(t-x)+x|L|-t|A|-t|B|+t^2\nn \\\nn&=&
t^2-xt-x|L|+x^2-r|H|t+|H|t=t^2-xt-x|L|+x^2-(t+|H|-x)t+|H|t\\\nn &=& x^2-x|L|.
\eer
 Here $L=H(A\pp{t}B)$, and the bound fails by a similar margin of $x|L|-x^2$, while that (\ref{t-sums-are-equal}) still does not hold can be routinely verified.

\medskip

Both of the above examples are not applicable for $t=2$, and, in fact, in this case we will be able to slightly improve the bound in (\ref{bound-pollard-weak}) to an optimal value as follows.

\begin{theorem}\label{thm-main-t=2} Let $G$ be an abelian group,
and let $A,\,B\subseteq
G$ be finite and nonempty. If $|A|,\,|B|\geq 2$, then either
\be\label{bound-pollard-weak-for-t2}|A\pp{1}
B|+|A\pp{2}B|\geq 2|A|+2|B|-4,\ee or else there exist $A'\subseteq A$ and $B'\subseteq B$ with \ber\label{bound-l-atmost-t-1-for-t2}  l&:=&|A\setminus A'|+|B\setminus B'|\leq 1,\\ \label{t-sums-are-equal-for-t2}A'\pp{2}B'&=&A'+B'=A\pp{2}B,\mbox{ and }\\\label{bound-stabilizer-for-t2}
|A\pp{1}B|+|A\pp{2}B|&\geq& 2|A|+2|B|-(2-l)(|H|-\rho)-2l\geq 2|A|+2|B|-2|H|,\eer where $H$ is the (nontrivial) stabilizer of $A\pp{2} B$ and $\rho=|A'+H|-|A'|+|B'+H|-|B'|$.
\end{theorem}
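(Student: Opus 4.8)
The plan is to prove Theorem~\ref{thm-main-t=2} by induction on $|A|+|B|$, the point being to improve the generic constant from the value $2|A|+2|B|-7$ furnished by the case $t=2$ of (\ref{bound-pollard-weak}) to the optimal $2|A|+2|B|-4$, while retaining the \emph{same} structural alternative as in Theorem~\ref{thm-main}. The heart of the matter is the core claim that, \emph{if the generic bound} (\ref{bound-pollard-weak-for-t2}) \emph{fails}, then $A\pp{2}B$ collapses to an ordinary sumset: there exist $A'\subseteq A$ and $B'\subseteq B$ with $|A\setminus A'|+|B\setminus B'|\leq 1$ and $A'+B'=A'\pp{2}B'=A\pp{2}B$, which is precisely (\ref{bound-l-atmost-t-1-for-t2}) and (\ref{t-sums-are-equal-for-t2}). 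Granting this, I finish as in Theorem~\ref{thm-main}: the sharp form (\ref{bound-kt}) of Kneser's Theorem applied to $A'+B'$ gives $|A\pp{2}B|=|A'+B'|\geq |A'|+|B'|-(|H|-\rho)=|A|+|B|-l-(|H|-\rho)$, where $H=H(A\pp{2}B)$ and $\rho$ is the $H$-hole count of (\ref{bound-stabilizer-for-t2}); choosing $A',B'$ optimally (so that $l=0$, i.e. $A+B=A\pp{2}B$, whenever every sum is doubly represented) then yields (\ref{bound-stabilizer-for-t2}). This also forces $H$ to be nontrivial, for a trivial $H$ would make the displayed lower bound read $|A\pp{2}B|\geq|A|+|B|-l-1$, whence $|A\pp{1}B|+|A\pp{2}B|\geq 2|A\pp{2}B|\geq 2(|A|+|B|-2)=2|A|+2|B|-4$ (using $|A+B|\geq|A\pp{2}B|$ and $l\leq1$), i.e. the generic bound itself.

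The base of the induction is $\min\{|A|,|B|\}=2$. If $B=\{b_1,b_2\}$, then $A\pp{1}B=(b_1+A)\cup(b_2+A)$ while $A\pp{2}B=(b_1+A)\cap(b_2+A)$, so by $|X\cup Y|+|X\cap Y|=|X|+|Y|$ one gets $|A\pp{1}B|+|A\pp{2}B|=|b_1+A|+|b_2+A|=2|A|=2|A|+2|B|-4$; thus (\ref{bound-pollard-weak-for-t2}) holds with equality and there is nothing to prove. This reduces the inductive step to $|A|,|B|\geq 3$.

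The engine of the induction is the exact removal identity: for $a_0\in A$,
\be\label{eq-removal}
|A\pp{1}B|+|A\pp{2}B|=|(A\setminus\{a_0\})\pp{1}B|+|(A\setminus\{a_0\})\pp{2}B|+|B|-|(a_0+B)\cap (A\pp{3}B)|.
\ee
This follows from $|X\cup Y|+|X\cap Y|=|X|+|Y|$ applied layer by layer, once one notes that deleting $a_0$ lowers $r_{A,B}(g)$ by exactly one for those $g\in a_0+B$ (and by zero otherwise), so that an element leaves the second layer precisely when it lies in $(a_0+B)\cap(A\pp{3}B)$. Averaging the last term, $\Summ{a_0\in A}|(a_0+B)\cap(A\pp{3}B)|=\Summ{g\in A\pp{3}B}r_{A,B}(g)$, so some $a_0$ incurs loss at most the average. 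If such an $a_0$ can be chosen with $|(a_0+B)\cap(A\pp{3}B)|\leq |B|-2$ \emph{and} with $(A\setminus\{a_0\},B)$ lying in the generic case of the inductive hypothesis, then (\ref{eq-removal}) gives $|A\pp{1}B|+|A\pp{2}B|\geq\big(2(|A|-1)+2|B|-4\big)+2=2|A|+2|B|-4$. In particular, when the third layer $A\pp{3}B$ is empty the loss vanishes for every $a_0$ and $|B|\geq 2$ closes the case at once.

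The main obstacle is the residual regime in which no admissible deletion exists, either from $A$ or (symmetrically) from $B$: for every $a_0$ either the loss is $\geq|B|-1$, or $(A\setminus\{a_0\},B)$ only satisfies the structural alternative. The first possibility forces $\Summ{g\in A\pp{3}B}r_{A,B}(g)\geq|A|(|B|-1)$, hence $\Summ{g\notin A\pp{3}B}r_{A,B}(g)\leq|A|$, so almost every element of $A+B$ is already $3$-representable; I expect to show that this density of high multiplicity forces $A\pp{2}B$ to be $H$-periodic, after which one extracts $A',B'$ with $A'+B'=A'\pp{2}B'=A\pp{2}B$ and $l\leq1$ by passing to $G/H$ exactly as in the proof of Kneser's Theorem. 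The second possibility requires reinstating $a_0$ into a structural description of $(A\setminus\{a_0\},B)$ and comparing the stabilizers $H\big((A\setminus\{a_0\})\pp{2}B\big)$ and $H(A\pp{2}B)$, verifying that $l$ stays $\leq1$; this bookkeeping is the delicate heart of the argument. It is exactly here that $t=2$ is decisive: the obstructions of Examples~1 and~2, responsible for the quadratic deficit in (\ref{bound-pollard-weak}) for $t\geq3$, cannot occur when only a single multiplicity layer sits above the sumset, which is what permits the optimal constant $-4$.
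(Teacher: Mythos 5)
Your outer scaffolding is sound: the base case $|B|=2$ is computed correctly, and the removal identity $|A\pp{1}B|+|A\pp{2}B|=|(A\setminus\{a_0\})\pp{1}B|+|(A\setminus\{a_0\})\pp{2}B|+|B|-|(a_0+B)\cap (A\pp{3}B)|$ is valid; in fact an ``admissible'' $a_0$ (loss at most $|B|-2$) is exactly an element whose deletion drops $|A\pp{1}B|+|A\pp{2}B|$ by at least $2$, which is the criterion the paper uses in its STEP 2. But the proposal stops precisely where the theorem becomes hard: both branches of your ``residual regime'' are asserted rather than proven. The claim that $\Summ{g\notin A\pp{3}B}r_{A,B}(g)\leq |A|$ forces $A\pp{2}B$ to be periodic, and that one can then ``extract $A',B'$ by passing to $G/H$,'' is flagged by you as ``I expect to show'' and is exactly the content for which the paper needs the Dyson e-transform with a maximal choice of $x$, the additive-energy estimate of Lemma \ref{AB_large_from_A-B}, the Sidon-set argument of its CASE 3.1 (the part genuinely special to $t=2$), and the whole CASE 4 analysis. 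It is a structural feature of Kneser-type theorems that induction by element removal alone cannot produce the periodic conclusion---this is why the e-transform exists at all---so what is missing is not bookkeeping but the core of the proof. The same applies to your branch (b): ``reinstating $a_0$'' is the paper's STEP 2, which requires showing the deleted element cannot lie in $B'+H$, invoking Kneser's Theorem together with Lemma \ref{lemma-for-critical-pair} so that each element outside $A'\cup B'$ contributes at least $|H|-\rho$ elements outside $A'+B'$, and splitting on the value of $l$; none of this is carried out. There is also a quantifier slip: the averaging bound $\Summ{g\in A\pp{3}B}r_{A,B}(g)\geq |A|(|B|-1)$ needs the big-loss alternative to hold for \emph{every} $a_0\in A$, whereas your regime only gives, for each $a_0$ separately, one of two alternatives; mixtures (some $a_0$ with large loss, others whose reduced pair is structural) are covered by neither sub-argument.

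There is a second, smaller but genuine, gap in your derivation of (\ref{bound-stabilizer-for-t2}) from the structural claim. When $l=1$, doubling Kneser's bound gives only $|A\pp{1}B|+|A\pp{2}B|\geq 2|A'+B'|\geq 2|A|+2|B|-2-2(|H|-\rho)$, which is strictly weaker than the required $2|A|+2|B|-2-(|H|-\rho)$, and ``choosing $A',B'$ optimally so that $l=0$'' is not available in general. The paper's STEP 1 closes exactly this deficit: using (\ref{ack}) and Lemma \ref{lemma-for-critical-pair}, the single element outside $A'\cup B'$ contributes at least $|H|-\rho$ elements of $A\pp{1}B$ not lying in $A'+B'$, which restores the missing $l(|H|-\rho)$ term. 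So even granting your core claim, the stated theorem does not follow by the argument you give; and without the e-transform machinery (or an equivalent substitute), the core claim itself remains unproved.
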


This extends the $2$-representable sums case of Pollard's Theorem and
immediately implies the following corollary, which was
proposed by Warren Dicks \cite{WDicks-announcement} as an open problem for any group (not
necessarily abelian) in connection with extensions (to more general
groups) of the Hanna Neumann Conjecture concerning the reduced rank
of the intersection of two free subgroups \cite{Hanna-Neumann-conj}. Details of the
connection between the two problems can be found in \cite{WDicks-item1}\cite{WDicks-item2} (in the
latter, a weaker form of Corollary \ref{the-cor} is proved which is
sufficient for their application extending the Hanna Neumann bound).

\begin{corollary}\label{the-cor} Let $G$ be an abelian group. Then
either $|A\pp{1}B|+|A\pt B|\geq 2|A|+2|B|-4$ or there exists a coset
$x+H\subseteq A\pt B$ with $|H|\geq 3$.
\end{corollary}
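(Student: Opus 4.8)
The plan is to derive this directly from Theorem \ref{thm-main-t=2} (the case $t=2$), so that essentially all the work reduces to unpacking its two alternatives. Theorem \ref{thm-main-t=2} requires $|A|,|B|\geq 2$, which I take as the standing hypothesis for the corollary as well; applying it to $A$ and $B$ yields the dichotomy, and I would trace through each branch. If the first alternative (\ref{bound-pollard-weak-for-t2}) holds, then $|A\pp{1}B|+|A\pt B|\geq 2|A|+2|B|-4$, which is verbatim the first alternative of the corollary, and nothing further is needed.

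So the substance lies in the structural alternative: there exist $A'\subseteq A$ and $B'\subseteq B$ satisfying (\ref{bound-l-atmost-t-1-for-t2})--(\ref{bound-stabilizer-for-t2}), with $H:=H(A\pt B)$ nontrivial and $\rho\geq 0$. The first step here is to observe that $A\pt B=A'+B'$ is nonempty: since $l=|A\setminus A'|+|B\setminus B'|\leq 1$ while $|A|,|B|\geq 2$, both $A'$ and $B'$ are nonempty, hence so is their sumset. Because $A\pt B$ is $H$-periodic by definition of its stabilizer, it is a nonempty union of $H$-cosets and so contains some coset $x+H$. Everything then reduces to the size of $H$: if $|H|\geq 3$, the coset $x+H\subseteq A\pt B$ is exactly the second alternative of the corollary, and we are done.

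The remaining and only delicate possibility is $|H|=2$, where I would feed $|H|=2$ into the stabilizer bound (\ref{bound-stabilizer-for-t2}) and simplify, using $0\leq l\leq 1$ and $\rho\geq 0$:
\[
|A\pp{1}B|+|A\pt B|\geq 2|A|+2|B|-(2-l)(2-\rho)-2l=2|A|+2|B|-4+(2-l)\rho\geq 2|A|+2|B|-4.
\]
Thus for $|H|=2$ the Kneser-type bound collapses precisely onto the first alternative of the corollary, completing the dichotomy. The main (indeed essentially the only) obstacle is this last line: one must verify that the deficiency $(2-l)(|H|-\rho)+2l$ in (\ref{bound-stabilizer-for-t2}) cannot exceed $4$ once $|H|=2$. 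The computation shows it never does, since $2-l\geq 1$ and $\rho\geq 0$ force the correction term $(2-l)\rho$ to be favorable. No idea beyond Theorem \ref{thm-main-t=2} is needed, so this is a genuine corollary rather than a disguised theorem.
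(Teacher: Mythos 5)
Your proof is correct and matches the paper's intended derivation: the paper treats the corollary as an immediate consequence of Theorem \ref{thm-main-t=2}, noting (in the remark following the corollary) that failure of (\ref{bound-pollard-weak-for-t2}) forces $|H|\geq t+1+\rho\geq 3$ via exactly the arithmetic you perform, so that the nonempty $H$-periodic set $A\pt B=A'+B'$ contains a coset of size at least $3$. Your case split on $|H|=2$ versus $|H|\geq 3$ is just the contrapositive arrangement of the same computation, and your observation that the $|A|,|B|\geq 2$ hypothesis must be carried over is apt.
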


The nonabelian version of Corollary \ref{the-cor} remains an open problem. We conclude the introduction by remarking that, assuming (\ref{bound-pollard-weak}) or (\ref{bound-pollard-weak-for-t2}) fails, then (\ref{bound-stabilizer}) and (\ref{bound-l-atmost-t-1}) imply $$|H|\geq t+1+\rho\geq t+1,$$ whence (\ref{bound-l-atmost-t-1}) and Proposition \ref{mult_result} below show that, in fact,
$$A'\pp{t+1}B'=A'+B'=A\pp{t+1}B.$$ If we only consider the case when (\ref{bound-pollard-weak}) fails, then this argument instead shows $|H|\geq 2t+\rho\geq 2t$ and $A'\pp{2t}B'=A'+B'=A\pp{2t}B,$ and hence every element with at least $t$ representations in $A+B$ has at least $2t$ representations.

\section{The Proof of Theorems \ref{thm-main} and \ref{thm-main-t=2}}

For the proof, we will need the following basic result \cite{natbook}\cite{kst}. The first part is a simple consequence of Kneser's Theorem, and the second of the pigeonhole principle.

\begin{theirproposition}\label{mult_result} Let $G$ be an abelian group with $A,\,B\subseteq G$
 finite and nonempty:

(i) if $|A+B|\leq |A|+|B|-k$, then $r_{A,B}(x)\geq k$ for all $x\in A+B$;

(ii) if $G$ is finite and $|A|+|B|\geq |G|+1$, then $A+B=G$.
\end{theirproposition}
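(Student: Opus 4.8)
The final statement to prove is Proposition \ref{mult_result}, which has two independent parts. Let me plan each separately.

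\textbf{Part (i):} The plan is to fix an arbitrary $x \in A+B$ and establish that $r_{A,B}(x) \geq k$ under the hypothesis $|A+B| \leq |A|+|B|-k$. First I would observe that $r_{A,B}(x) = |(x-B)\cap A|$, so I want to lower-bound the overlap between the translate $x-B$ and the set $A$. The natural approach is inclusion–exclusion: since both $A$ and $x-B$ are contained in $A+B$ (the latter because $x\in A+B$ means $x-B \subseteq (A+B)-B$... more carefully, $x = a_0+b_0$ for some $a_0\in A$, $b_0\in B$, so $x - b \in A + b_0 - b + \cdots$, but the clean containment is $x-B \subseteq A+B$ precisely when $x\in A+B$, via $x-b = a_0 + (b_0 - b)$, which is not obviously in $A+B$). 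The correct containment to use is that both $A$ and $x-B$ lie in the set $A+B$ only after noting $A \subseteq A+B$ requires $0\in B$, which need not hold. So instead I would work inside $A \cup (x-B)$ and use $A \cup (x-B) \subseteq A+B$ together with $|A|+|x-B| = |A \cup (x-B)| + |A \cap (x-B)|$, giving $r_{A,B}(x) = |A\cap(x-B)| = |A|+|B| - |A\cup(x-B)| \geq |A|+|B| - |A+B| \geq k$, where the key containment $A\cup(x-B)\subseteq A+B$ is what needs justification. The hard part here is verifying that containment: $A\subseteq A+B$ and $x-B\subseteq A+B$ both fail in general, so the actual argument must instead consider the sumset $A+B$ restricted appropriately, or invoke Kneser's Theorem directly on a translated configuration; I expect the cleanest route is to apply Kneser's Theorem to deduce that if many $x$ had few representations, the sumset would be forced larger than $|A|+|B|-k$, yielding a contradiction.

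\textbf{Part (ii):} This is the pigeonhole step and is straightforward. The plan is to fix any $g \in G$ and show $g \in A+B$. I would consider the two subsets $A$ and $g-B$ of $G$; their sizes satisfy $|A| + |g-B| = |A|+|B| \geq |G|+1 > |G|$. Since two subsets of a finite set of total size exceeding $|G|$ must intersect (pigeonhole), there exists $z \in A \cap (g-B)$, so $z = a = g-b$ for some $a\in A$, $b\in B$, giving $g = a+b \in A+B$. As $g$ was arbitrary, $A+B = G$. This part presents no obstacle.

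Thus the only genuinely delicate point is the containment/counting in part (i); once that is set up correctly the whole proposition follows. I would present part (i) via the inclusion–exclusion identity $r_{A,B}(x) = |A| + |B| - |A \cup (x-B)|$ combined with the bound $|A\cup(x-B)| \leq |A+B|$, after establishing the latter carefully (tracking that every element of $A\cup(x-B)$ indeed has a representation as a sum from $A$ and $B$, using $x\in A+B$), and then invoke the hypothesis $|A+B|\leq |A|+|B|-k$.
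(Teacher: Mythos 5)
Your part (ii) is correct and is exactly the pigeonhole argument the paper intends. Part (i), however, has a genuine gap: the entire plan rests on the containment $A\cup(x-B)\subseteq A+B$, and this containment is simply false, even under the hypothesis $|A+B|\leq|A|+|B|-k$, so no amount of ``careful establishment'' can rescue it. Concretely, take $G=\Z$, $A=\{0,1,2\}$, $B=\{0,1\}$, so $|A+B|=4=|A|+|B|-1$, and take $x=0\in A+B$: then $x-B=\{0,-1\}$ and $-1\notin A+B$. Worse, the numerical bound you want, $|A\cup(x-B)|\leq|A+B|$, is---by your own inclusion--exclusion identity $r_{A,B}(x)=|A|+|B|-|A\cup(x-B)|$---literally equivalent to the conclusion $r_{A,B}(x)\geq|A|+|B|-|A+B|$, so ``establishing the bound and then invoking the hypothesis'' is circular. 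The natural repair of your idea, namely replacing $A$ and $x-B$ by the translates $A+b_0$ and $a_0+B$ (where $x=a_0+b_0$), which genuinely do lie inside $A+B$, proves a different statement: $|(A+b_0)\cap(a_0+B)|\geq k$ counts representations of the \emph{difference} $a_0-b_0$ in $A-B$, not representations of the sum $x$. That difference-set analogue is the elementary fact; part (i), about sums, is strictly stronger and cannot be reached by pure inclusion--exclusion.

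The missing idea is the one the paper points to when it calls (i) ``a simple consequence of Kneser's Theorem'': periodicity of $A+B$ lets you run the pigeonhole \emph{inside a single coset of the stabilizer}. Let $H$ be the stabilizer of $A+B$ and $\rho=(|A+H|-|A|)+(|B+H|-|B|)$. If $|A+B|\leq|A|+|B|-k$, then Kneser's inequality $|A+B|\geq|A+H|+|B+H|-|H|$ forces $|H|\geq k+\rho$. Now write $x=a_0+b_0$ with $a_0\in A$, $b_0\in B$, and set $P=(a_0+H)\cap A$ and $Q=(b_0+H)\cap B$. Since $-H=H$, both $P$ and $x-Q$ lie in the single coset $a_0+H$, of size $|H|$, so $r_{A,B}(x)\geq|P\cap(x-Q)|\geq|P|+|Q|-|H|\geq\bigl(|H|-(|A+H|-|A|)\bigr)+\bigl(|H|-(|B+H|-|B|)\bigr)-|H|=|H|-\rho\geq k$. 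Your fallback sentence gestures at Kneser's Theorem but never produces this per-element coset argument, which is where the actual content of part (i) lies.
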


We will also need the following lower bound estimate that shows that
if most of the elements of $A-B$ have a small number of
representations, then $\Sum{i=1}{t}|A\pp{i}B|$ must be large. The case $t=1$ was previously treated in \cite{KST+1}. The proof makes use of the notion of additive energy (see \cite{taobook}\cite{KST+1}), which we will introduce in the proof.

\begin{lemma}\label{AB_large_from_A-B} Let $A$, $B$ and $T$ be finite subsets of an
abelian group $G$ with $|A|\geq |B|\geq k\geq 1$ and $|A|\geq |T|$. Let $t\geq 1$. If
$r_{A,-B}(x)\leq k$ for all $x\in G\setminus T$, then \ber\Sum{i=1}{t}|A\pp{i}B|&\geq&
t\cdot\min\left\{\frac{|A||B|}{t+\sqrt{t(t-1)}},\,\frac{|A|^2|B|}{|T|(|B|-k)+k|A|}\right\}
\\&\geq&
\min\left\{\frac{|A||B|}{2},\,t\cdot\frac{|A|^2|B|}{|T|(|B|-k)+k|A|}\right\}\nn.\eer
\end{lemma}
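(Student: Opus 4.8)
The plan is to prove Lemma~\ref{AB_large_from_A-B} by introducing the \emph{additive energy} $E(A,B):=\sum_{x\in G}r_{A,-B}(x)^2$, which counts quadruples $(a,a',b,b')\in A\times A\times B\times B$ with $a-b=a'-b'$ (equivalently $a-a'=b-b'$), and relating it to the quantity $\Sum{i=1}{t}|A\pp{i}B|$ that we wish to bound below. The two key inequalities will be a lower bound for $E(A,B)$ coming from Cauchy--Schwarz, and an upper bound that incorporates the hypothesis $r_{A,-B}(x)\leq k$ off of $T$. For the Cauchy--Schwarz bound, note $\sum_x r_{A,-B}(x)=|A||B|$ and the support of $r_{A,-B}$ has size $|A-B|$, so $E(A,B)\geq |A|^2|B|^2/|A-B|$; the link to $t$-representable sums is that $A-B=A+(-B)$ and $A\pp{i}(-B)$-type sets let us rewrite sums of representation functions. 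More precisely, I expect to use the elementary identity $\sum_{i\geq 1}|A\pp{i}B|=|A||B|$ (each element $c$ is counted $r_{A,B}(c)$ times) together with the telescoping fact that $\Sum{i=1}{t}|A\pp{i}B|=\sum_{c}\min\{t,\,r_{A,B}(c)\}$, so the truncated sum is exactly the $\ell^1$ mass of the representation function capped at height $t$.

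The heart of the argument will be to bound $E(A,B)$ from above in terms of $\Sum{i=1}{t}|A\pp{i}B|$ and then combine with the Cauchy--Schwarz lower bound. Writing $S:=\Sum{i=1}{t}|A\pp{i}B|$ and splitting $E(A,B)=\sum_x r_{A,-B}(x)^2$ according to whether $x\in T$ or $x\notin T$, one controls the off-$T$ contribution using $r_{A,-B}(x)\leq k$ (so $r^2\leq k\cdot r$ there), while on $T$ one uses $r_{A,-B}(x)\leq\min\{|A|,|B|\}$ together with $|T|\leq|A|$. The idea is that the energy is maximized when the representation function is as concentrated as possible subject to the capping constraint $S$ and the pointwise bound $k$ off $T$; a careful weighting should yield something like $E(A,B)\leq \bigl(|T|(|B|-k)+k|A|\bigr)\cdot\frac{S}{t}$ after optimizing how the mass $|A||B|$ is distributed between the high-multiplicity region (forced into $T$) and the low-multiplicity region. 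Comparing this with $E(A,B)\geq |A|^2|B|^2/|A-B|$ and with the direct Cauchy--Schwarz bound on the capped function should produce the two terms in the minimum: the term $\frac{|A||B|}{t+\sqrt{t(t-1)}}$ arising from pure Cauchy--Schwarz on the capped sum (the $\sqrt{t(t-1)}$ being the signature of optimizing a quadratic in the truncation level), and the term $\frac{|A|^2|B|}{|T|(|B|-k)+k|A|}$ arising from the energy-with-$T$ estimate.

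The main obstacle I anticipate is the bookkeeping in the upper bound for the energy: one must carefully separate the mass contributed by elements with $r_{A,-B}(x)>t$ (which are capped in $S$ but contribute their full, possibly large, square to $E$) from those with $r_{A,-B}(x)\leq t$, and show that the former are confined to $T$ and collectively cost at most $|T|(|B|-k)+k|A|$ when weighted correctly. Getting the constant $t+\sqrt{t(t-1)}$ exactly right requires optimizing a one-variable quadratic that balances the two regimes, and the appearance of $\sqrt{t(t-1)}$ (rather than, say, $2t-1$) will come from treating the truncation height as a continuous parameter; I would set up the extremal configuration explicitly to pin down this constant. Finally, the weaker second displayed bound follows by the elementary estimates $t+\sqrt{t(t-1)}\leq 2t$ and by retaining the factor $t$ in the second term, so once the sharp inequality is established the second line is immediate.
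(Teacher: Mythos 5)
Your ingredients are the correct ones, and they largely coincide with the paper's: the identity $\Sum{i=1}{t}|A\pp{i}B|=\sum_c\min\{t,r_{A,B}(c)\}$, the sum--difference energy identity $\sum_x r_{A,-B}(x)^2=\sum_c r_{A,B}(c)^2$, and the $T$-splitting of the difference-side energy, which indeed yields $E\leq |B|M$ where $M:=|T|(|B|-k)+k|A|$. The gap is in how you assemble them. First, the optimization you describe --- maximizing energy ``subject to the capping constraint $S$ and the pointwise bound $k$ off $T$'' --- is ill-posed: the cap $S$ constrains the sum function $r_{A,B}$, while the bound $k$ off $T$ constrains the difference function $r_{A,-B}$; no single function satisfies both constraints, and the only bridge between the two sides is the energy identity (moreover, without the off-$T$ bound the sum-side maximization of $E$ given $S$ is unbounded in the continuous model). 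Second, and decisively, the comparison you propose cannot give the lemma even if one grants your intermediate inequality $E\leq M\cdot S/t$ (which does hold, via $E\le|B|M$ and $S\geq t|A|$): the Cauchy--Schwarz lower bound $E\geq |A|^2|B|^2/|A-B|$ is weakest exactly when $|A-B|$ is large, which is also when $S$ is largest, so the two bounds do not interact usefully. Concretely, take $A,B$ dissociated (all sums and all differences distinct), $T=\emptyset$, $k=1$: then $|A-B|=|A||B|$, Cauchy--Schwarz gives only $E\geq|A||B|$, your upper bound reads $E\leq|A|S/t$, and comparing yields merely $S\geq t|B|$, whereas the lemma asserts $S\geq \frac{t}{t+\sqrt{t(t-1)}}|A||B|\geq\frac{1}{2}|A||B|$ in this case. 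Using instead $E\geq|A|^2|B|^2/|A+B|$ together with $|A+B|\leq S$ gives $S\geq|A||B|\sqrt{t/M}$, a square-root-type bound that is again far below the claimed linear one.

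What is actually needed --- and what the paper does --- is to use the hypothesis exactly once, to fix a numerical energy \emph{budget} independent of $S$: the $T$-splitting gives $\sum_c\binom{r_{A,B}(c)}{2}\leq e$ with $e=\frac{1}{2}\left(|T||B|(|B|-k)+(k-1)|A||B|\right)$, and then one solves a constrained minimization problem purely on the sum side: minimize $\sum_c\min\{t,r_c\}$ over all distributions of the mass $\sum_c r_c=|A||B|$ into cliques of total energy at most $e$. An extremal argument shows a minimizer has all cliques of size greater than $t$ equal to a common size $l$ and all remaining cliques of size at most $1$; writing the objective as $|A||B|-r(l-t)$ subject to $\frac{l(l-1)}{2}r\leq e$ and $lr\leq|A||B|$, one-variable calculus in $l$ produces precisely the dichotomy of the lemma: the interior critical point $l=t+\sqrt{t(t-1)}$ gives the first term of the minimum, and the boundary case $lr=|A||B|$ gives the term $t|A|^2|B|/M$. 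Your closing remark about ``setting up the extremal configuration explicitly'' points in this direction, but the plan as assembled --- two bounds on $E$ played against each other --- does not reach either display of the lemma; the energy bound must enter as a constraint in a minimization of the capped sum, not as one side of a comparison. (A cruder version of this constrained argument, applying Cauchy--Schwarz only to the cliques of size exceeding $t$ against the fixed budget $|B|M$, already recovers the weaker second display; the sharp constant $t+\sqrt{t(t-1)}$ genuinely requires the full optimization.)
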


\begin{proof}To proceed with the proof of Lemma \ref{AB_large_from_A-B}, we will need to introduce some concepts from \cite{KST+1}\cite{taobook}. For subsets $A,\,B\subseteq G$, we define a simple graph $\G(A,B)$ with vertex set $V(\G(A,B))=A\times B$ and edge set $E(\G(A,B))=\{\{(a,b),\,(a',b')\}\mid a+b=a'+b'\}$. Thus $\G(A,B)$ consists of $|A+B|$ cliques, one for each element of $A+B$, with the size of each clique equal to the number of representations of the element associated to the clique. The map from $E(\G(A,B))\rightarrow E(\G(A,-B))$ given by $\{(a,b),\,(a',b')\}\mapsto \{(a,-b'),\,(a',-b)\}$ is easily seen to be a bijection (it is its own inverse) between $E(\G(A,B))$ and $E(\G(A,-B))$, and so \be\label{A+B-edges-equal A-B}|E(\G(A,B))|=|E(\G(A,-B))|.\ee The quantity $|E(\G(A,B))|$ is known as the \emph{(reduced) additive energy} of the pair $A$ and $B$ (the reduced refers to the fact that we have removed all loops and double edges for our formulation).

Next we proceed to find an upper bound for (\ref{A+B-edges-equal A-B}).
Our hypotheses imply that $|V(K)|\leq k$, and thus
\be\label{pow1}|E(K)|\leq \frac{k(k-1)}{2},\ee for all but $|T|$ of the $|A-B|$ cliques of $\G(A,-B)$. For the remaining $|T|$ cliques, we have the trivial bound $|V(K)|\leq \min\{|A|,\,|B|\}=|B|$, and thus \be\label{pow2}|E(K)|\leq \frac{|B|(|B|-1)}{2}.\ee  By means of a simple extremal argument or discrete derivative, it is easily noted (in view of $|B|\geq k$ and $|A|\geq |T|$) that $|E(\G(A,-B))|$ will be maximized by taking $|T|$ cliques of size $|B|$, followed by as many cliques of size $k$ as possible, followed by (possibly) one remaining clique using all remaining vertices. Thus, combining (\ref{pow1}), (\ref{pow2}) and (\ref{A+B-edges-equal A-B}), we see (in view of $|A|\geq |T|$) that \ber\nn |E(\G(A,B))|=|E(\G(A,-B))|&\leq& |T|\binom{|B|}{2}+\frac{|A||B|-|T||B|-\delta}{k}\binom{k}{2}+\binom{\delta}{2}\\&=&
\frac{|T||B|(|B|-k)+(k-1)|A||B|-\delta(k-\delta)}{2}\nn\\&\leq& \frac{|T||B|(|B|-k)+(k-1)|A||B|}{2},\label{upperbound}\eer where $|A||B|-|T||B|\equiv \delta\mod k$ with $0\leq \delta<k$.

Our goal is, assuming the energy of our system is bounded by a value $e$, i.e., $|E(\G(A,B))|\leq e$, to find the minimum for the function  $\Sum{i=1}{t}|A\pp{i}B|=\Summ{K}\max\{|V(K)|,\,t\}$ over all possible configurations of $|A||B|$ points into cliques $K$, or at least to accurately bound this minimum. We will eventually apply this bound using $e$ as defined by (\ref{upperbound}). However, in order to simplify calculations, we model the problem by allowing the number of vertices and edges in a clique to be nonnegative real numbers. Thus a clique $K$ of size $|V(K)|=l\in \R_{\geq 0}$ has by definition $|E(K)|=\max\{\frac{l(l-1)}{2},\,0\}\in \R_{\geq 0}$ edges/energy. Since this only adds more flexibility to the values of the variables, any bound found under these conditions will provide a bound in the more restrictive case when all variables assume integer values. We also drop the restriction that a clique can have size at most $|B|$, as using this restriction would only improve the bounds when $|A|$, $|B|$ and $|E(\G(A,B))|$ are much smaller than the range we are concerned with here. Since $\Sum{i=1}{t}|A\pp{i}B|=|A||B|$ holds trivially for $e\leq \frac{t(t-1)}{2}$, we assume $e>\frac{t(t-1)}{2}$ (so there is at least one clique of size greater than $t$).

Once again, by means of a simple extremal argument or derivative, it is easily seen that, given any configuration $D$ of points into cliques, we can find a configuration $D'$ whose energy and $\Sum{i=1}{t}|A\pp{i}B|$ value are at most those of $D$, and such that (a) all cliques $K$ of $D'$ with $|V(K)|> t$ are of equal size, and (b) all other cliques have size at most one. Thus we may restrict our attention to considering only configurations satisfying (a) and (b). Let $r$ be the number of cliques $K$ with $|V(K)|>t$, and let $|V(K)|=l$ for each such clique. Then \ber \label{one} \frac{l(l-1)}{2}r&\leq& e,\\\label{three}lr&\leq &|A||B|,\\\label{five} l&>&t\geq 1,\eer and we are trying to minimize the function $f(r,l)=|A||B|-r(l-t)$ given the above constraints. Having restricted to this subset of configurations, we can further relax the parameters of the problem by allowing $r$ to be a positive real number. Again, as this only widens the domain of the variables, finding a bound for $\min f(r,l)$ under these conditions will give a bound for the original question. However, now the problem is reduced to a much simpler minimization question.

It easily seen that a minimum for $f$ can only occur if equality holds in (\ref{one}) (otherwise, if $rl<|A||B|$, then $f$ decreases by increasing $l$, while if $rl=|A||B|$, then $f$ decreases by maintaining $rl=|A||B|$ and decreasing $r$). Thus \ber \label{vampire-strut}f(r,l)=|A||B|-\frac{2e}{l(l-1)}(l-t)\\\label{regres}\frac{2e}{|A||B|}+1\leq l,\eer where (\ref{regres}) is just (\ref{three}) under the substitution given by (\ref{one}).
Computing the derivative of (\ref{vampire-strut}) with respect to $l$, we obtain \be \frac{2e}{l^2(l-1)^2}\left(l^2-2tl+t\right)\label{wacha}.\ee Analyzing (\ref{wacha}), we see that  $f$ attains its minimum when $l=t+\sqrt{t(t-1)}$, provided \be\label{tchuse}\frac{2e}{|A||B|}+1\leq t+\sqrt{t(t-1)}\ee (in view of (\ref{five}) and the boundary condition given by (\ref{regres})), and otherwise $f$ attains it minimum at the boundary value given by (\ref{regres}) (which is just (\ref{three}) reworded).

If the latter case occurs, then equality in (\ref{three}) implies $l=\frac{|A||B|}{r}$ and $f(r,l)=tr$, and then (\ref{one}) implies  $$e=\frac{r}{2}(\frac{|A||B|}{r})(\frac{|A||B|}{r}-1)=
\frac{|A|^2|B|^2}{2r}-\frac{|A||B|}{2}.$$ Using (\ref{upperbound}) for $e$ and combining with the above, we obtain
$$\Sum{i=1}{t}|A\pp{i}B|\geq \min f(r,l)=tr\geq t\cdot\frac{|A|^2|B|}{|T|(|B|-k)+k|A|},$$ as desired.

It remains to consider the case when (\ref{tchuse}) holds with $l=t+\sqrt{t(t-1)}$. Rearranging (\ref{tchuse}), we obtain \be\label{wonder}e\leq\frac{t-1+\sqrt{t(t-1)}}{2}|A||B|.\ee In view of the original inequality in (\ref{one}), we see that the minimum of $f(r,l)$ is decreasing with $e$. Hence a lower bound for $f(r,l)$ is obtained by taking the minimum of $f(r,l)$ in the case equality holds in (\ref{wonder}). Thus, substituting (\ref{wonder}) in (\ref{vampire-strut}) and recalling that $l=t+\sqrt{t(t-1)}$, we find that $$\Sum{i=1}{t}|A\pp{i}B|\geq \min f(r,l)\geq |A||B|\frac{t}{t+\sqrt{t(t-1)}}\geq \frac{|A||B|}{2},$$ where the latter estimate follows from $t+\sqrt{t(t-1)}\leq 2t$, and the proof is complete.
\end{proof}

\medskip

The following simple lemma will allow us to derive (\ref{bound-stabilizer}) from (\ref{bound-l-atmost-t-1}) and (\ref{t-sums-are-equal}), assuming (\ref{bound-pollard-weak}) or (\ref{bound-pollard-weak-for-t2}) fails.

\begin{lemma}\label{lemma-for-critical-pair} Let $A$ and $B$ be finite, nonempty subsets of an abelian group $G$. If $|A+B|=|A|+|B|-1$ and $A+B$ is aperiodic, then $|A+(B\cup \{b\})|>|A+B|$ for every $b\in G\setminus B$.
\end{lemma}

\begin{proof}If $|A+(B\cup \{b\})|=|A+B|$ for some $b\in G\setminus B$, then it follows from $|A+B|=|A|+|B|-1$ that $|A+(B\cup \{b\})|<|A|+|B\cup \{b\}|-1$, whence Kneser's Theorem implies $A+(B\cup \{b\})=A+B$ is periodic, contradicting our hypotheses.
\end{proof}

We now proceed to prove Theorems \ref{thm-main} and \ref{thm-main-t=2} \emph{assuming} Kneser's Theorem (which is the case $t=1$ in Theorem \ref{thm-main}) is known. Thus the proof as structured below does not independently give a proof of Kneser's Theorem (the case $t=1$), though it could be made to do so with some simple modifications (in fact, CASE 4.2 is based off the basic outline of a method originally used to prove Kneser's Theorem, see \cite{taobook}\cite{PhD-Dissertation}). The
proof makes use of the latest machinery that increases the utility
of the Dyson e-transform and that was originally developed to help
extend Kemperman's Structure Theorem \cite{KST+1}. Since both proofs are almost identical, differing only in how $|T|$ will be estimated, we prove them simultaneously.

\begin{proof} We may w.l.o.g. assume $|A|\geq |B|$.
Suppose  $|B|=t$. Then $$\Sum{i=1}{t}|A\pp{i}B|=\Sum{i=1}{\infty}|A\pp{i}B|=|A||B|=t|A|+t|B|-t^2,$$ yielding (\ref{bound-pollard-weak}) or (\ref{bound-pollard-weak-for-t2}), as desired. So we may assume $|B|\geq t+1$.

We now proceed (as in the proof of Kneser's Theorem found in
\cite{taobook}) by a triple induction, first assuming the theorem
verified for all $A'$ and $B'$ with $\Sum{i=1}{t}|A'\pp{i}B'|<\Sum{i=1}{t}|A\pp{i}B|$, next assuming
the theorem for all $A'$ and $B'$ with $\Sum{i=1}{t}|A'\pp{i}B'|=\Sum{i=1}{t}|A\pp{i}B|$ and
$|A'|+|B'|>|A|+|B|$ (note $|A'|+|B'|$ is bounded from above by
$2|A'+B'|\leq 2\Sum{i=1}{t}|A\pp{i}B|$), and finally for all $A'$ and $B'$ with $\Sum{i=1}{t}|A'\pp{i}B'|=\Sum{i=1}{t}|A\pp{i}B|$,
$|A'|+|B'|=|A|+|B|$ and $\min \{|A'|,\,|B'|\}<\min
\{|A|,\,|B|\}=|B|$. In view of the previous paragraph, the base of
the induction is complete. We divide the proof into several major phases.

\medskip

\textbf{STEP 1: }First we show that (\ref{bound-l-atmost-t-1}) and (\ref{t-sums-are-equal}) holding (for some $A'\subseteq A$, $B'\subseteq B$ and $H\leq G$) implies that either (\ref{bound-l-atmost-t-1}), (\ref{t-sums-are-equal}) and (\ref{bound-stabilizer}) all hold (for some $A''\subseteq A$ and $B''\subseteq B$ and $H\leq G$) or that \be\label{bound-t^2} \Sum{i=1}{t}|A\pp{i}B|\geq t|A|+t|B|-t^2,\ee and thus in either case the proof is complete. Let $A''\supseteq A'$ and $B''\supseteq B'$ be defined by including all elements of $(A\setminus A')\cap (A'+H)$ and $(B\setminus B')\cap (B'+H)$, respectively. Note that the hypotheses of STEP 1 still hold with $A''$ and $B''$ replacing $A'$ and $B'$, and thus we may w.l.o.g. assume $A'=A''$ and $B'=B''$.

If $|A'+B'|\geq |A'|+|B'|-1$, then (\ref{bound-l-atmost-t-1}) and (\ref{t-sums-are-equal}) imply $$|A+B|\geq t|A'+B'|\geq t|A'|+t|B'|-t\geq t|A|+t|B|-tl-t\geq t|A|+t|B|-t^2,$$ yielding (\ref{bound-t^2}), as desired. Therefore assume $|A'+B'|<|A'|+|B'|-1$, whence Kneser's Theorem (see the comments after (\ref{bound-kt-holes})) implies that \be\label{ack}|\phi_H(A')+\phi_H(B')|=|\phi_H(A')|+|\phi_H(B')|-1.\ee By the definition of $\rho$, any $H$-coset that intersects $A'$ or $B'$ does so in at least $|H|-\rho$ points. Thus, since $b\notin B'+H$ and $a\notin A'+H$ for $a\in A\setminus A'$ and $b\in B\setminus B'$ (in view of $A'=A''$ and $B'=B''$), it follows from (\ref{ack}) and Lemma \ref{lemma-for-critical-pair} that each element of $A\setminus A'$ and of $B\setminus B'$ contributes at least $|H|-\rho$ elements to $\Sum{i=1}{t}|A\pp{i}B|$ not contained in $A'+B'$. Since $|A\setminus A'|+|B\setminus B'|=l\leq t-1$ (in view of (\ref{bound-l-atmost-t-1})), it follows that none of these contributed elements has more than $t-1$ total representations in $A+B$, and thus the individual contributions are cumulative. (Had $l>t$ it would be possible that the same element outside $A'+B'$ occurred at least $t+1$ times in $A+B$, in which case we could not count all of its occurrences as contributing to $\Sum{i=1}{t}|A\pp{i}B|$; however, this is not the case.) Consequently, in view of (\ref{bound-kt-holes}) and (\ref{t-sums-are-equal}), we see that
\ber\nn\Sum{i=1}{t}|A\pp{i}B|&\geq& t|A'+B'|+l(|H|-\rho)\geq t|A'|+t|B'|-t(|H|-\rho)+l(|H|-\rho)\\\nn &=& t|A|+t|B|-(t-l)(|H|-\rho)-tl\geq t|A|+t|B|-t|H|,\eer where the last inequality follows in view of $|H|-\rho\geq t+1$, which we have else the above bound and (\ref{bound-l-atmost-t-1}) instead imply (\ref{bound-t^2}). Thus either (\ref{bound-t^2}) or (\ref{bound-stabilizer}) follows, and STEP 1 is complete.

\medskip

\textbf{STEP 2: } Next we show that if the following condition holds, then the proof is complete: suppose there exists $y\in A$ or $y\in B$ such that \ber y\in B&\mbox{ and }& \Sum{i=1}{t}|A\pp{i}(B\setminus y)|\leq \Sum{i=1}{t}|A\pp{i}B|-t,\mbox{ or}\label{trum-1}\\ y\in A&\mbox{ and }&\Sum{i=1}{t}|(A\setminus y)\pp{i}B|\leq \Sum{i=1}{t}|A\pp{i}B|-t \label{trum-2}.\eer As the proof of both cases is identical, assume that (\ref{trum-1}) holds. Then we may apply the induction hypothesis to $A$ and $B\setminus \{y\}$.  If (\ref{bound-pollard-weak}) or (\ref{bound-pollard-weak-for-t2}) holds for $A$ and $B\setminus \{y\}$, then in view of (\ref{trum-1}) it follows that (\ref{bound-pollard-weak}) or (\ref{bound-pollard-weak-for-t2}) holds (respectively) for $A$ and $B$, as desired. Consequently,
(\ref{bound-l-atmost-t-1}), (\ref{t-sums-are-equal}), and (\ref{bound-stabilizer}) hold for $A$ and $B\setminus \{y\}$. The remainder of the proof is now just a variation on the arguments used in STEP 1.

Let $A'\subseteq A$, $B'\subseteq B\setminus\{y\}$, $H\leq G$, $l$ and $\rho$ be as defined by Theorem \ref{thm-main} or \ref{thm-main-t=2} for $A$ and $B\setminus \{y\}$.
By the same reasoning used in STEP 1, we may assume all elements of $(A'+H)\cap A$ and $(B'+H)\cap (B\setminus \{y\})$ are included in $A'$ and $B'$, respectively. If $y\in B'+H$, then we see from (\ref{t-sums-are-equal}) that every element of $A'+y$ would have at least $t+1$ representation in $A+B$, whence (in view of (\ref{bound-l-atmost-t-1})) $$\Sum{i=1}{t}|A\pp{i}(B\setminus\{y\})|\geq \Sum{i=1}{t}|A\pp{i}B|-|(A\setminus A')+y|\geq \Sum{i=1}{t}|A\pp{i}B|-l\geq \Sum{i=1}{t}|A\pp{i}B|-t+1,$$ contradicting (\ref{trum-1}). Therefore $y\notin B'+H$, and thus all elements of $(A'+H)\cap A$ and $(B'+H)\cap B$ are included in $A'$ and $B'$, respectively.

Since (\ref{bound-pollard-weak}) or (\ref{bound-pollard-weak-for-t2}) fails for $A$ and $B\setminus\{y\}$, and thus (\ref{bound-t^2}) cannot hold, we must have $|A'+B'|<|A'|+|B'|-1$ (again, the same as in STEP 1), and hence \be\label{lambs3}|\phi_H(A')+\phi_H(B')|=|\phi_H(A')|+|\phi_H(B')|-1\ee follows by Kneser's Theorem.

Suppose $l\leq t-2$. Then $t\geq 2$, (\ref{bound-l-atmost-t-1}) holds for $A$ and $B$, and (\ref{t-sums-are-equal}) implies that $$A'\pp{t}B'=A'+B'=A\pp{t}(B\setminus\{y\})=A\pp{t-1}(B\setminus \{y\}).$$ Thus, since $$A\pp{t}B\subseteq A\pp{t}(B\setminus\{y\})\cup A\pp{t-1}(B\setminus\{y\})$$ holds for $t\geq 2$, we see that (\ref{t-sums-are-equal}) also holds for $A$ and $B$, whence STEP 1 completes the proof. So we may assume $l=t-1$.

Since all elements of $(A'+H)\cap A$ and $(B'+H)\cap B$ are included in $A'$ and $B'$, respectively, it follows, in view of Lemma \ref{lemma-for-critical-pair} and (\ref{lambs3}), that each element of $A\setminus A'$ and $B\setminus B'$ contributes at least $|H|-\rho$ elements to $\Sum{i=1}{t}|A\pp{i}B|$ not contained in $A'+B'$. Since $|A\setminus A'|+|B\setminus B'|=l+1=t$, it follows that none of these contributed elements has more than $t$ total representations in $A+B$, and thus the individual contributions are cumulative (the same as we argued in STEP 1). Thus we conclude from Kneser's Theorem and (\ref{t-sums-are-equal}) that \ber\nn\Sum{i=1}{t}|A\pp{i}B|&\geq& t|A'+B'|+t(|H|-\rho)\geq t(|A'|+|B'|-|H|+\rho)+t(|H|-\rho)\\&=&t|A'|+t|B'|=t|A|+t|B\setminus \{y\}|-tl=t|A|+t|B|-t^2,\nn\eer yielding  (\ref{bound-pollard-weak}) or (\ref{bound-pollard-weak-for-t2}), and completing STEP 2.

\medskip

Note that if we have $a_0\in A$ and $b_0\in B$ such that \be\label{unstuff}|(a_0+B)\setminus (A\pp{t+1}B)|+|(A+b_0)\setminus (A\pp{t+1}B)|\geq 2t-1,\ee then, by the pigeonhole principle, we can apply STEP 2 with one of $y=a_0$ or $y=b_0$, and the proof is complete.

Next, observe that if $a+b\notin A\pp{t+1}B$, where $a\in A$ and $b\in B$, then
$c\notin A\pp{t+1}B$ for every $c\in (H(A)+a)+b$. Consequently, \be\label{stab-A-small} |H(A)|\leq t-1,\ee since otherwise either $A+B=A\pp{t+1}B=A\pp{t}B$, in which case STEP 1 completes the proof, or else there is $a+b\notin A\pp{t+1}B$, with $a\in A$ and $b\in B$, and then STEP 2 applied with $y=b$ completes the proof.

We now introduce the Dyson transform. For $x\in A-B$, let $B(x)=(x+B)\cap A$ and $A(x)=(x+B)\cup A$. Observe
that
\ber\label{dyson-card-unchanged}|A(x)|+|B(x)|&=&|A|+|B|,\\\label{dyson-sums}
A(x)\pp{i}B(x)&\subseteq& x+A\pp{i}B,\eer for all $i\geq 1$ (both these observations are shown in Pollard's original paper \cite{Pollard}\cite{natbook}).
If $|B(x)|=|B|$ for all $x\in A-B$, then $A-B+B=A$, whence $B-B\subseteq H(A)$ with $|B-B|\geq |B|\geq t+1$, contradicting (\ref{stab-A-small}). Thus we can choose $x\in A-B$ so that $|B(x)|<|B|$, and assume $x$ is chosen so as to maximize $|B(x)|$ (subject to $|B(x)|<|B|$). Let $T\subseteq A-B$ be all those elements $y$ such that $y+B\subseteq A$. Thus \be\label{TB-subset-A} T+B\subseteq A.\ee

\medskip

\textbf{STEP 3: } We proceed to show that $|B(x)|\geq t$, else the proof is complete. To that end, suppose $|B(x)|\leq t-1$. Consequently, $t\geq 2$. We distinguish two cases. In both cases, we begin by deriving some combinatorial consequences of the definition of $T$, the hypothesis $|B(x)|\leq t-1$  and our previous work, which will then be used to find an upper bound for $|T|$. Using this upper bound, we will then apply Lemma \ref{AB_large_from_A-B} to show either $|A+B|$ or $\Sum{i=1}{t}|A\pp{i}B|$ is too large for our given hypotheses, or that $|B|$ is so small that the theorem follows trivially.

\medskip

\emph{CASE 3.1:} $t=2$. From STEP 2, we may assume every element $b\in B$ has at most one unique expression element $a+b\in A+B$. Thus \be\label{tritebite}|A+B|-|A\pp{2}B|\leq |B|.\ee If $|A+B|<|A|+|B|-1$, then Proposition \ref{mult_result} implies $A+B=A\pp{2}B$, whence STEP 1 completes the proof. Therefore we may assume \be\label{reet}|A+B|=|A|+|B|-1+r,\ee with $r\geq 0$. Consequently, we have \be\label{reet2} |A\pp{2}B|\leq |A|+|B|-4-r,\ee else (\ref{bound-pollard-weak-for-t2}) holds (as desired). From (\ref{reet}), (\ref{reet2}) and (\ref{tritebite}), we conclude that $$|B|\geq |A+B|-|A\pp{2}B|\geq 2r+3,$$ and thus \ber\label{queque} r&\leq& \frac{|B|-3}{2},\\ \label{t2-AB-upperbound}|A+B|&\leq& |A|+\frac{3}{2}|B|-\frac{5}{2}.\eer

If $a\in A\setminus (T+B)$ and $a+b=a'+b'$ for some $b,\,b'\in B$ and $a'\in A$ with $a\neq a'$, then $a-b'+B$ contains both $a=a-b'+b'$ and $a'=a-b'+b$; thus $|B(a-b')|\geq 2$, whence our assumption $|B(x)|\leq 1$ and the maximality of $x$ imply that $a-b'+B\subseteq A$. But then $a-b'\in T$ and $a\in T+B$, contrary to assumption. Thus we see that every element from $a+B$, for $a\in A\setminus (T+B)$, is a unique expression element in $A+B$. However, in view of STEP 2, we may assume there is at most one unique expression element of the form $a+b$ for each $a\in A$. As a result (since $|B|\geq 3$), we conclude (in view of (\ref{TB-subset-A})) that \be\label{TB=A} T+B=A.\ee In particular, $T$ is nonempty.

Let $y\in B\pp{2}-B$. Thus $y=b_1-b_2=b'_1-b'_2$, for some $b_i,\,b'_i\in B$ with $b_1\neq b'_1$. As a result, for any $z\in T$, we have $z+\{b_1,\,b'_1\}\subseteq y+z+B$. Since $z+B\subseteq A$ (as $z\in T$), we see that $z+b_1$ and $z+b'_1$ are two distinct elements of $A$ contained in $y+z+B$. Thus our assumption $|B(x)|\leq 1$ and the maximality of $x$ imply that $y+z+B\subseteq A$, and thus $y+z\in T$. Since $z\in T$ and $y\in B\pp{2}-B$ were arbitrary, it follows that $B\pp{2}-B \subseteq H(T)$, and hence $$\langle B\pp{2}-B\rangle \subseteq H(T).$$ Thus, from (\ref{TB=A}) and (\ref{stab-A-small}), we conclude that $B\pp{2}-B=\{0\}$, i.e., $B$ is a Sidon set (see \cite{taobook}). Consequently, \be\label{onemore}|B+B|=\frac{|B|(|B|+1)}{2}.\ee

Suppose $|T+B+B|<|T|+|B+B|-1$. Thus Proposition \ref{mult_result} and (\ref{TB=A}) imply that $A+B=T+(B+B)=T\pp{2}(B+B)$. Hence any $c\in A+B$ has at least two representations of the form $c=z+(b_1+b_2)$ and $c=z'+(b'_1+b'_2)$, with $z,\,z'\in T$ distinct and $b_i,\,b'_i\in B$. If $b_1\neq b_2$, then $c=(z+b_1)+b_2$ and $c=(z+b_2)+b_1$ are two representations of $c\in A+B$ (recall $A=T+B$), and thus $c\in A\pp{2}B$. Likewise if $b'_1\neq b'_2$. However, if $b_1=b_2=b$ and $b'_1=b'_2=b'$, then $z\neq z'$ implies $b\neq b'$, whence $c=(z+b)+b$ and $c=(z+b')+b'$ are two representations of $c\in A+B$. Thus we see that $c\in A\pp{2}B$ in all cases, which, since $c\in A+B$ was arbitrary, implies $A+B=A\pp{2}B$, and now STEP 1 completes the proof. So we can instead assume $$|T+B+B|\geq |T|+|B+B|-1.$$ Thus, from (\ref{onemore}), (\ref{TB=A}) and (\ref{t2-AB-upperbound}), it follows that \be\label{upp} |T|\leq |A|-\frac{1}{2}|B|^2+|B|-\frac{3}{2}.\ee

Now observe, since $r_{A,-B}(x)=|(x+B)\cap A|=|B(x)|$, that we can apply Lemma \ref{AB_large_from_A-B} with $k=t=1$. Thus, in view of (\ref{upp}) and (\ref{t2-AB-upperbound}), it follows that
 $$\frac{|A|^2|B|}{(|A|-\frac{1}{2}|B|^2+|B|-\frac{3}{2})(|B|-1)+|A|}\leq \frac{|A|^2|B|}{|T|(|B|-1)+|A|}\leq |A+B|\leq |A|+\frac{3}{2}|B|-\frac{5}{2}.$$ Rearranging this expression yields $$-2|A|(|B|-3)(|B|^2-3|B|+1)-3|B|^4+14|B|^3-30|B|^2+34|B|-15\geq 0.$$ Since $|B|\geq t+1=3$, applying the estimate $|A|\geq |B|$ yields $$-5|B|^4+26|B|^3-50|B|^2+40|B|-15\geq 0,$$ which can be verified to never hold, a contradiction. This completes CASE 3.1.

\medskip

\emph{CASE 3.2: } $t\geq 3$. Observe that we have the trivial bound $\Sum{i=1}{t}|A\pp{i}B|\geq t|A|$, which, since $|B|\geq t+1$, can easily be seen to hold with equality only if $|A+B|=|A|$. However, in such case we have $-b+B\subseteq H(A)$ with $|B|\geq t+1$ (for any $b\in B$), which contradicts (\ref{stab-A-small}). Therefore we conclude $\Sum{i=1}{t}|A\pp{i}B|\geq t|A|+1$, and consequently we may assume \be \label{B-is-big}|B|\geq 2t+1,\ee else (\ref{bound-pollard-weak}) follows (as desired).

Suppose $|\phi_K(B)|=1$ and $|\phi_K(A)|\geq 2$ for some subgroup $K\leq G$. Decompose $A=A_1\cup \ldots\cup A_r$ with each $A_i$ nonempty and contained in a distinct $K$-coset. By our supposition, we have $r\geq 2$, and thus we can apply the induction hypothesis to each pair $A_i$ and $B$ with $|A_i|\geq t$. However, if for some $i$ we have $|A_i|<t$, then (in view of $|B|\geq t$) we could apply STEP 2 with $y=a_i\in A$, for any $a_i\in A_i$, to complete the proof. Therefore we may assume $|A_i|\geq t$ for all $i$. If (\ref{bound-pollard-weak}) holds for some pair $A_j$ and $B$, then, using the trivial estimate $\Sum{i=1}{t}|A_k\pp{i}B|\geq t|A_k|$ for all $k\neq j$ and summing estimates, we conclude that (\ref{bound-pollard-weak}) holds for $A$ and $B$, as desired. Thus we may assume  (\ref{bound-l-atmost-t-1}), (\ref{t-sums-are-equal}) and (\ref{bound-stabilizer}) hold for each pair $A_i$ and $B$. Moreover, from (\ref{bound-stabilizer}) we see that $|H|-\rho_i\geq t+1$ (where $\rho_i$ is the value $\rho$ from Theorem \ref{thm-main}  when applied to $A_i$ and $B$), else (\ref{bound-pollard-weak}) would hold for $A_i$ and $B$, contrary to what we have just shown. Thus, if $A_i+B\neq A_i\pp{t}B$, then (in view of (\ref{bound-l-atmost-t-1}) and (\ref{t-sums-are-equal}) holding for $A_i$ and $B$) we can find some $a_i\in A_i$ for which there are at least $|H|-\rho_i\geq t$ elements $a_i+b\in (A+B)\setminus (A\pp{t}B)$ (by the same arguments used in STEPS 1 and 2), whence STEP 2 completes the proof. Therefore we can instead assume  $A_i+B= A_i\pp{t}B$ for all $i$, and hence $A+B=A\pp{t}B$, and now STEP 1 completes the proof. So we conclude that $|\phi_K(B)|=1$ implies $|\phi_K(A)|=1$, for any subgroup $K\leq G$.

Let $H=H(T+B)$. Note, from the definition of $T$, that $T$ must be itself be $H$-periodic, and thus $H(T)=H$. If $|\phi_H(B)|=1$, then $|H|\geq |B|\geq t+1$ and $|\phi_H(A)|=1$ (in view of the previous paragraph), but then $|\phi_H(A)|=1$ and (\ref{TB-subset-A}) implies $T+B=A$, so that $H=H(T+B)=H(A)$, contradicting (\ref{stab-A-small}) and $|H|\geq |B|\geq t+1$. Therefore $|\phi_H(B)|\geq 2$.

Suppose there is some $b_0\in B$ with $|(b_0+H)\cap B|\geq t$. Let $B_0=(b_0+H)\cap B$. Since $|\phi_H(B)|\geq 2$, let $b_1\in B\setminus (b_0+H)$. Let $z\in T$ be arbitrary. Since $T+B$ is $H$-periodic, it follows that $b_1-b_0+z+B$ contains all the elements from $$b_1-b_0+z+B_0\subseteq z+b_1+H\subseteq T+B+H=T+B\subseteq A.$$ Thus, since $|B_0|\geq t$, it follows, in view of our assumption $|B(x)|\leq t-1$ and the maximality of $x$, that $b_1-b_0+z+B\subseteq A$, and thus $b_1-b_0+z\in T$. Since $z\in T$ was arbitrary, we conclude $b_1-b_0\in H(T)$. However, since $\phi_H(b_1)\neq \phi_H(b_0)$, this contradicts that $H(T)=H$. So we may instead assume \be\label{qubal}|(b+H)\cap B|\leq t-1,\ee for all $b\in B$.

We proceed to show that \be\label{T-bone} |T|\leq |A|-|B|+t-1.\ee If this is false, then, since $|T+B|\leq |A|$ (in view of (\ref{TB-subset-A})), we conclude that \be\label{tre} |T+B|\leq |T|+|B|-t.\ee Thus Kneser's Theorem implies that \be\label{tree}|T+B|=|T|+|B|-(|H|-\rho),\ee where $\rho=|T+H|-|T|+|B+H|-|B|$ is the number of $H$-holes in $T$ and $B$. Hence (\ref{tre}) and (\ref{tree}) imply that $|H|-\rho\geq t$. However, now $|(b+H)\cap B|\geq |H|-\rho\geq t$ for each $b\in B$, which contradicts (\ref{qubal}). So (\ref{T-bone}) is established.

Now observe, since $r_{A,-B}(x)=|(x+B)\cap A|=|B(x)|$, that we can apply Lemma \ref{AB_large_from_A-B} with $k=t-1$. If $$\frac{1}{2}|A||B|\leq \Sum{i=1}{t}|A\pp{i}B|\leq t|A|+t|B|-2t^2$$ holds, then we have $$(|A|-2t)|B|\leq 2t|A|-4t^2.$$ In view of $|A|\geq |B|\geq 2t+1$ (see (\ref{B-is-big})), we can apply the estimate $|B|\geq 2t+1$ (from \ref{B-is-big}) to obtain $|A|\leq 2t$, contradicting $|A|\geq |B|$ and (\ref{B-is-big}). Therefore (since we can assume (\ref{bound-pollard-weak}) fails, else the proof is complete) we instead conclude that $$\frac{|A|^2|B|}{|T|(|B|-(t-1))+(t-1)|A|}\leq \frac{1}{t}\Sum{i=1}{t}|A\pp{i}B|\leq |A|+|B|-2t.$$ Applying the estimate (\ref{T-bone}) and rearranging the inequality, we obtain
$$0\leq -|A|(2|B|+(t-1)^2)-|B|^3-2|B|^2-|B|+6t|B|+4t|B|^2-4t^2-5t^2|B|+2t^3+2t.$$ Applying the estimate $|A|\geq |B|$ yields $$0\leq -|B|^3+4(t-1)|B|^2-(6t^2-8t+2)|B|+2t^3-4t^2+2t.$$ A routine calculation shows that derivative with respect to $|B|$ is negative in the above expression, and thus applying the estimate $|B|>2t$ (from (\ref{B-is-big})) yields $$0< -2t^3-4t^2-2t,$$ a contradiction, completing STEP 3.

\medskip

We may assume \be\label{whacky-stuff} \Sum{i=1}{t}|A\pp{i}B|< t|A|+t|B|-t^2,\ee else (\ref{bound-pollard-weak}) or (\ref{bound-pollard-weak-for-t2}) follows for $A$ and $B$, as desired. Since the problem is translation invariant, we may w.l.o.g. assume $x=0$. Since from STEP 3 we now know that $t\leq |B(x)|<|B|$, it follows in view of (\ref{dyson-card-unchanged}) and (\ref{dyson-sums}) that we can apply the induction hypothesis to the pair $A(x)$ and $B(x)$. If (\ref{bound-pollard-weak}) or (\ref{bound-pollard-weak-for-t2}) holds for $A(x)$ and $B(x)$, then the respective (\ref{bound-pollard-weak}) or (\ref{bound-pollard-weak-for-t2}) holds for $A$ and $B$ in view of (\ref{dyson-sums}) and (\ref{dyson-card-unchanged}), and the proof is complete. Therefore we may instead assume (\ref{bound-l-atmost-t-1}), (\ref{t-sums-are-equal}) and (\ref{bound-stabilizer}) hold for $A(x)$ and $B(x)$. Let $A'$ and $B'$, $H=(A'+B')$, $l$ and $\rho$ be as defined from Theorem \ref{thm-main} or \ref{thm-main-t=2} for $A(x)$ and $B(x)$. Note that we have \be\label{H-rho-not=too-small} |H|-\rho\geq \max\{2t-1,\,t+1\},\ee since otherwise (\ref{bound-stabilizer}), (\ref{dyson-card-unchanged}) and (\ref{dyson-sums}) imply that (\ref{bound-pollard-weak}) or (\ref{bound-pollard-weak-for-t2}) holds, as desired. Also, we may assume $l$ is minimal, and thus $A'$ and $B'$ contain all elements from $(A'+H)\cap A(x)$ and $(B'+H)\cap B(x)$, respectively, and in consequence (as we have seen in STEPS 1 and 2), that each of the $l$ elements lying outside the respective $A'$ and $B'$ each contributes at least $|H|-\rho\geq t+1$ (in view of (\ref{H-rho-not=too-small})) elements to $\Sum{i=1}{t}|A(x)\pp{i}B(x)|$ which lie outside $A'+B'$. We divide the remainder of the proof into two main cases.

\medskip

\emph{CASE 4.1: } $l\geq 1$. Thus we either have some $b\in B(x)\setminus B'$ or some $a\in A(x)\setminus A'$. Suppose $a\in A(x)\setminus A'$. Then we see (in view of the minimality of $l$) that there is a coset $y+H$ such that all of the at least $|H|-\rho$ elements of $((y+H)\cap B(x))+a$ lie outside $A'+B'$. Since all the elements of $(y+H)\cap B(x)$ lie both in $A$ and $B$ (by definition of $B(x)=A\cap B$; recall $x=0$) we see that removing $a$ from its respective set $A$ or $B$ will decrease $\Sum{i=1}{t}|A\pp{i}B|$ by one for each of the elements of $((y+H)\cap B(x))+a$ that have at most $t$ representations in $A+B$. Thus either the conditions of STEP 2 hold with $y=a$, and then the proof is complete, or else we see that there is a subset $B_y\subseteq ((y+H)\cap B(x))+a$ with $|B_y|=|H|-\rho-(t-1)$ and each element from $B_y\subseteq ((y+H)\cap B(x))+a$ having at least $t+1$ representations in $A+B$. Each of the $l$ elements outside $A'$ and $B'$ contributes at least $|H|-\rho$ elements to $\Sum{i=1}{t}|A\pp{i}B|$ that lie outside $A'+B'$ (in view of the minimality of $l$), and at most $|H|-\rho-(t-1)$ of these contributed elements may be equal to an element from $B_y$. As a result, it follows, in view of (\ref{dyson-sums}), (\ref{t-sums-are-equal}), $l\leq t-1$, Kneser's Theorem, (\ref{dyson-card-unchanged}) and (\ref{bound-l-atmost-t-1}), that
\ber\nn\Sum{i=1}{t}|A\pp{i}B|&\geq& t|A'+B'|+t|B_y|+l(t-1)\\&\geq&\nn t(|A'|+|B'|-|H|+\rho)+t(|H|-\rho-(t-1))+l(t-1)\\\label{jj} &=& t|A|+t|B|-t^2+t-l\geq t|A|+t|B|-t^2+1.\eer But this contradicts (\ref{whacky-stuff}). So we may instead assume $A'=A(x)$ and that there is some $b\in B(x)\setminus B'$.

In this case, we see (in view of the minimality of $l$) that there is a coset $y+H\subseteq A'+H$ such that all of the at least $|H|-\rho$ elements of $((y+H)\cap A(x))+b$ lie outside $A'+B'$.
Thus, letting $A_y=(y+H)\cap A$ and $B_y=(y+H)\cap B$, we see (from the definition of $A(x)$ and $B(x)$) that $|A_y|+|B_y|\geq |H|-\rho+|A_y\cap B_y|$, and so we can find disjoint subsets $A'_y\subseteq A_y$ and $B'_y\subseteq B_y$ such that $|A'_y|+|B'_y|\geq |H|-\rho$. Since $b\in B(x)=A\cap B$ (recall $x=0$), we have $b\in B$ and $b\in A$. Thus we see that removing $b$ from both $A$ and $B$ will decrease $\Sum{i=1}{t}|A\pp{i}B|$ by one for each of the elements of $(A'_y+b)\cup (b+B'_y)$ that have at most $t$ representations in $A+B$. Thus either (\ref{unstuff}) holds with $a_0=b_0=b$ (in view of (\ref{H-rho-not=too-small})), and then the proof is complete, or else we see that there is a subset $C_y\subseteq (A'_y+b)\cup (b+B'_y)$ with $|C_y|=|H|-\rho-(2t-2)$ and each element from $C_y\subseteq (A'_y+b)\cup (b+B'_y)$ having at least $t+1$ representations in $A+B$. However, arguing as we did to establish (\ref{jj}), we then find that \ber\nn\Sum{i=1}{t}|A\pp{i}B|&\geq& t|A'+B'|+t|C_y|+l(2t-2)\\&\geq&\nn t(|A'|+|B'|-|H|+\rho)+t(|H|-\rho-(2t-2))+l(2t-2)\\\nn &=& t|A|+t|B|-2t^2+2t+(t-2)l,\eer whence (\ref{bound-pollard-weak}) or (\ref{bound-pollard-weak-for-t2}) follows, as desired. This completes CASE 4.1

\medskip

\emph{CASE 4.2: } $l=0$. In this case, we have that $A'=A(x)$, $B'=B(x)$ and $A(x)\pp{t}B(x)=A(x)+B(x)$. Suppose $B(x)$ is not $H$-periodic. Then, since $A(x)\pp{t}B(x)=A(x)+B(x)$,  we can include an element $\alpha$ from $(B(x)+H)\setminus B(x)$ into either $A$ or $B$ yielding a new pair $X$ and $Y$ with $|X|+|Y|=|A|+|B|+1$ and \be\label{qw}A\pp{i}B=X\pp{i}Y,\ee for all $i\leq t$. Thus we can apply the induction hypothesis to $X$ and $Y$. If (\ref{bound-pollard-weak}) or (\ref{bound-pollard-weak-for-t2}) holds for $X$ and $Y$, then the respective (\ref{bound-pollard-weak}) or (\ref{bound-pollard-weak-for-t2}) holds for $A$ and $B$ (in view of (\ref{qw})), as desired. Therefore we may instead assume (\ref{bound-l-atmost-t-1}), (\ref{t-sums-are-equal}) and (\ref{bound-stabilizer}) hold for $X$ and $Y$. Let $A''$, $B''$, $H'$ and $\rho'$ be the corresponding quantities $A'$, $B'$, $H$ and $\rho$ resulting from applying Theorem \ref{thm-main} or \ref{thm-main-t=2} to $X$ and $Y$. Note that $|H'|-\rho'\geq t+1$ (by the same argument used to establish (\ref{H-rho-not=too-small})), else (\ref{bound-pollard-weak}) or (\ref{bound-pollard-weak-for-t2}) would hold for $X$ and $Y$, contrary to assumption. If $\alpha\notin A''$ (if we included $\alpha$ in $A$) or $\alpha\notin B''$ (if we included $\alpha$ in $B$), then (\ref{bound-l-atmost-t-1}) and (\ref{t-sums-are-equal}) still hold after removing $\alpha$ (for $A$ and $B$), and thus the proof is complete in view of STEP 1. On the other hand, if
$\alpha\in A''$ (if we included $\alpha$ in $A$) or $\alpha\in B''$ (if we included $\alpha$ in $B$), then, in view of $|H'|-\rho-1\geq t$ and Proposition \ref{mult_result}, we see that (\ref{bound-l-atmost-t-1}) and (\ref{t-sums-are-equal}) still hold after removing $\alpha$ (for $A$ and $B$), and thus the proof is once again complete in view of STEP 1. So we may instead assume $B(x)$ is $H$-periodic. Consequently, \be\label{cue}B(x)\subseteq B(x+y),\ee for any $y\in H$.

Partition $A=B(x)\cup A_0\cup A_1$, with $A_0$ all those elements $a\in A$ with $a\notin A\cap B$ but $\phi_H(a)\in \phi_H(A)\cap \phi_H(B)$, and $A_1$ all remaining elements. Likewise partition $B=B(x)\cup B_0\cup B_1$. Observe (in view of $B(x)$ being $H$-periodic) that $\phi_H(A_0)=\phi_H(B_0)$ and $r|H|-|A_0|-|B_0|\geq 0$, where $r=|\phi_H(A_0)|=|\phi_H(B_0)|$. Let $\rho_1=|A_1+H|-|H|+|B_1+H|-|H|$ be the number of $H$-holes in $A_1$ and $B_1$. Note that \be\label{gogogadget} \rho\geq \rho_1+r|H|-|A_0|-|B_0|\geq \rho_1.\ee

Suppose $B_1$ is nonempty. Then we may assume $A_0$ and $B_0$ are both empty, else any element $y\in ((\alpha+H)\cap A_0)-((\alpha+H)\cap B_0)\subseteq H$, where $\phi_H(\alpha)\in \phi_H(A_0)=\phi_H(B_0)$, will in view of (\ref{cue}) contradict the maximality of $x$ (since no element of $y+x+B_1=y+B_1$ will lie in $A$ by definition of $B_1$, and thus $|B(x+y)|<|B|$). Hence, since $|A|\geq |B|$ and $B(x)$ is $H$-periodic, it follows that $A_1$ is also nonempty. Now we must have $a\in A''$ and $b\in B''$ with $|(a+H)\cap A|+|(b+H)\cap B|\leq |H|+t-1$, else Proposition \ref{mult_result} and (\ref{t-sums-are-equal}) imply that $A\pp{t}B=A+B$, and then STEP 1 completes the proof. Thus from (\ref{gogogadget}) we conclude that $\rho\geq \rho_1\geq |H|-t+1$, which contradicts (\ref{H-rho-not=too-small}). So we may assume $B_1$ is empty.

Since $|B(x)|<|B|$ and $B_1=\emptyset$, it follows that $B_0$ is nonempty. Let $\alpha_1,\ldots,\alpha_r\in G$ be a set of mod $H$ representatives for the $r$ elements of $\phi_H(A_0)=\phi_H(B_0)$, and let $C_i=(\alpha_i+H)\cap A$ and $D_i=(\alpha_i+H)\cap B$. For any $y\in C_j-D_j\subseteq H$ with $j\leq r$, we have, by the maximality of $x$ and (\ref{cue}), that $y+D_i\subseteq C_i$ for all $i$. Consequently,
\be\label{vitadel} C_j-D_j+D_i\subseteq C_i,\ee for all $i$ and $j$. In particular, the $C_i$ are all translates of one another and $-\alpha_i+D_i\subseteq K$ for all $i$, where $K=H(C_j)$ (since the $C_j$ are all translates of one another, $H(C_i)=H(C_j)$ for all $i$ and $j$).

Since $C_i\cap D_i=\emptyset$ for each $i$ (by definition of $A_0$ and $B_0$), we see that $|K|<|H|$ and $|C_i|<|H|$. Thus for each $C_i$ there must exist a $D_{\sigma(i)}$ such that $C_i+D_{\sigma(i)}$ does not lie in $A'+B'$, else we could include an element from $(\alpha_i+H)\setminus C_i$ into $A$ and complete the proof by the same arguments used when $B(x)$ was not $H$-periodic. However, note that there may be more than one possible choice for $\sigma(i)$, and thus several possible ways to define $\sigma$. Also, since $|C_i|=|C_{\sigma(i)}|$ (as all the $C_i$ are translates of one another), we conclude that \be\label{gogogadget-2} \rho\geq \rho_1+|H|-|C_i|-|D_{\sigma(i)}|,\ee for all $i$. Note, from (\ref{t-sums-are-equal}), (\ref{bound-stabilizer}), (\ref{dyson-card-unchanged}) and $l=0$, that we have \be\label{goodstuff} \Sum{i=1}{t}|A(x)\pp{i}B(x)|=t|A'+B'|\geq t|A|+t|B|-t|H|+t\rho.\ee
We distinguish three short subcases.

\medskip

\emph{SUBCASE 4.2.1:} $|D_{j}|\geq t$ for all $j$.
Then, since each $C_i$ is $K$-periodic and each $D_j$ is a subset of a $K$-coset, it follows from Proposition \ref{mult_result} and (\ref{t-sums-are-equal}) that either $A+B=A\pp{t}B$ or else there exists $\alpha\in A_1$ and $D_k$ with $((\alpha+H)\cap A)+D_k$ lying outside $A'+B'$. We may assume the latter, else STEP 1 completes the proof. Let $X=(\alpha+H)\cap A$. Since $X+D_k$ lies outside $A'+B'$, it follows, in view of (\ref{dyson-sums}), (\ref{goodstuff}), (\ref{gogogadget}) and the trivial estimate $\Sum{i=1}{t}|X\pp{i}D_k|\geq t|X|$, that  \ber\nn\Sum{i=1}{t}|A\pp{i}B|&\geq& t|A'+B'|+t|X| \\\nn&\geq& t|A|+t|B|-t|H|+t\rho+t|X|\geq t|A|+t|B|-t|H|+t\rho_1+t|X|\\\nn&\geq& t|A|+t|B|-t|H|+t(|H|-|X|)+t|X|=t|A|+t|B|,\eer which contradicts (\ref{whacky-stuff}). This completes SUBCASE 4.2.1

\medskip

\emph{SUBCASE 4.2.2:} $|D_{\sigma(i)}|\geq t$ for all $i$ and all possible $\sigma$, i.e., either $A_0+D_j\subseteq A'+B'$ or $|D_j|\geq t$, for all $j$. Thus, in view of SUBCASE 4.2.1, we may assume there exists $D_j$ with $|D_j|\leq t-1$ and $A_0+D_j\subseteq A'+B'$; moreover, there must also exist $\alpha\in A_1$ with $((\alpha+H)\cap A)+D_k$ lying outside $A'+B'$, else we can include an element from $(\alpha_j+H)\setminus D_j$ in $B$ and complete the proof by the same arguments used when $B(x)$ was not $H$-periodic. Let $X=(\alpha+H)\cap A$. In view of (\ref{dyson-sums}), (\ref{goodstuff}) and (\ref{gogogadget}), we have \ber\nn \Sum{i=1}{t}|A\pp{i}B|&\geq& \Sum{i=1}{t}|A(x)\pp{i}B(x)|=t|A'+B'|\geq t|A|+t|B|-t|H|+t\rho\\&\geq& t|A|+t|B|-t|H|+t\rho_1\nn \geq t|A|+t|B|-t|H|+t(|H|-|X|).\eer Thus (\ref{whacky-stuff}) implies that $|X|\geq t+1$.

Since $|X|\geq t+1$, either the conditions of STEP 2 hold with $y=\beta$, where $\beta$ is any element of $D_j$, in which case the proof is complete, or else  (\ref{dyson-sums}), (\ref{goodstuff}) and (\ref{gogogadget}) imply \ber\nn\Sum{i=1}{t}|A\pp{i}B|&\geq& t|A'+B'|+t(|X|-(t-1))+(t-1)\\\nn &\geq&
t|A|+t|B|-t|H|+t\rho+t|X|-t^2+2t-1\\\nn &\geq& t|A|+t|B|-t|H|+t\rho_1+t|X|-t^2+2t-1\\\nn &\geq&
t|A|+t|B|-t|H|+t(|H|-|X|)+t|X|-t^2+2t-1\\\nn &=&t|A|+t|B|-t^2+2t-1,
\eer which contradicts (\ref{whacky-stuff}). This completes SUBCASE 4.2.2.

\medskip

\emph{SUBCASE 4.2.3:} For some $i$ and some possible $\sigma$, we have $|D_{\sigma(i)}|\leq t-1$.
Since $C_i$ is $K$-periodic and since $D_{\sigma(i)}$ is a subset of a $K$-coset, it follows in view of Proposition \ref{mult_result} that every element of $C_i+D_{\sigma(i)}$ has at least $|D_{\sigma(i)}|$ representations. Hence, if $|C_i|\geq t$, then either the conditions of STEP 2 hold with $y=\beta$, where $\beta$ is any element of $D_{\sigma(i)}$, in which case the proof is complete, or else $|D_{\sigma(i)}|\leq t-1$, (\ref{dyson-sums}), (\ref{goodstuff}) and (\ref{gogogadget-2}) imply
\ber\nn\Sum{i=1}{t}|A\pp{i}B|&\geq& t|A'+B'|+t(|C_i|-(t-1))+(t-1)|D_{\sigma(i)}|\\\nn &\geq&
t|A|+t|B|-t|H|+t\rho+t(|C_i|-(t-1))+(t-1)|D_{\sigma(i)}|\\\nn &\geq& t|A|+t|B|-t|H|+t(|H|-|C_i|-|D_{\sigma(i)}|)+t(|C_i|-(t-1))+(t-1)|D_{\sigma(i)}|\\\nn &=&
t|A|+t|B|-t^2+t-|D_{\sigma(i)}|,
\eer which, in view of $|D_{\sigma(i)}|\leq t-1$, contradicts (\ref{whacky-stuff}). Therefore we may instead assume $|C_i|\leq t-1$.

In this case, we have $\Sum{i=1}{t}|C_i\pp{i}D_{\sigma(i)}|=|C_i||D_{\sigma(i)}|$, whence (\ref{dyson-sums}), (\ref{goodstuff}) and (\ref{gogogadget-2}) imply \ber\nn\Sum{i=1}{t}|A\pp{i}B|&\geq& t|A'+B'|+|C_i||D_{\sigma(i)}|\\\nn &\geq&
t|A|+t|B|-t|H|+t\rho+|C_i||D_{\sigma(i)}|\\\nn &\geq& t|A|+t|B|-t|H|+t(|H|-|C_i|-|D_{\sigma(i)}|)+|C_i||D_{\sigma(i)}|\\\nn &=& t|A|+t|B|-t|C_i|-t|D_{\sigma(i)}|+|C_i||D_{\sigma(i)}|.
\eer However, since $|C_i|\leq t-1$ and $|D_{\sigma(i)}|\leq t-1$, the above bound implies \ber\nn\Sum{i=1}{t}|A\pp{i}B|&\geq& t|A|+t|B|-t|C_i|-t|D_{\sigma(i)}|+|C_i||D_{\sigma(i)}|\\ &\geq&\nn t|A|+t|B|-t(t-1)-t|D_{\sigma(i)}|+(t-1)|D_{\sigma(i)}|\\\nn &=&t|A|+t|B|-t^2+t-|D_{\sigma(i)}|\geq
t|A|+t|B|-t^2+1,\eer contradicting (\ref{whacky-stuff}), and completing the proof.
\end{proof}

\textbf{Acknowledgements.} The author is very grateful to Warren Dicks for having posed the problem in Corollary \ref{the-cor}, which proved to be the inspiration for this paper, as well as for several fruitful discussions on the topic. Many thanks are also due to the Centre de Recerca Matem\`{a}tica, who graciously hosted the author in Barcelona during the 2008 DocCourse in Additive Combinatorics, where this research was begun.

\end{document}